\numberwithin{equation}{section}
\newtheorem{theorem}{Theorem}[section]
\newtheorem{lem}{Lemma}[section]
\newtheorem{remark}{Remark}[section]
\newtheorem{cor}{Corollary}[section]
\def\includegraphics{}
\begin{document}
\title{\bf A partially observed non-zero sum differential game of forward-backward stochastic differential equations and its application in finance}
\author{
Jie Xiong $^{a}$, Shuaiqi Zhang $^{b}$, Yi Zhuang $^{c}$\\
\small{$^a$ Department of Mathematics, University of Macau, Macau, PR China}\\
\small{$^b$ School of Economics and Commerce, Guangdong University of Technology, Guangzhou 510520, PR China}\\
\small{$^{c}$ School of Mathematics, Shandong University, Jinan 250100, PR China}
}
\maketitle
\begin{abstract}
In this article, we concern a kind of partially observed non-zero sum stochastic differential game based on forward and backward stochastic differential equations (FBSDEs). It is required that each player has his own observation equation, and the corresponding open-loop Nash equilibrium control is required to adapted to the filtration that the observation process generated. To find this open-loop Nash equilibrium point, we prove the maximum principle as a necessary condition of the existence of this point, and give a verification theorem as a sufficient condition to verify it is the real open-loop Nash equilibrium point. Combined this with reality, a financial investment problem is raised. We can obtain the explicit observable investment strategy by using stochastic filtering theory and the results above.

\end{abstract}

{\bf Keywords.} forward-backward stochastic equation, differential game, maximum principle, partial

\ \ \ \ \ \ \ \ \ \ \ \ \ \ \ \ \ information, stochastic filtering

\section{Introduction}
\subsection{Historical contribution}
	
The general theory of backward stochastic differential equation (BSDE) was first introduced by Pardoux and Peng \cite{Peng90}. For the BSDE coupled with a forward stochastic differential equation (SDE), it is  so-called the forward and backward stochastic differential equation (FBSDE), which has important applications in many areas in our society. In stochastic control area, the Hamiltonian system is one of the form of FBSDEs. More essentially in financial market, the famous Black-Scholes option pricing formula can be deduced by a certain FBSDE. Some research based on FBSDE is surveyed by Ma and Yong \cite{MaYong99}.

In stochastic control theory, one can use control to reach a maximum or minimum objection based on stochastic differential system. Peng \cite{Peng93} firstly considered the maximum principle of convex domain forward-backward stochastic control system. In the following, Xu \cite{Xu95} dealt with a case that control domain doesn't need to be convex and there is no control variable in diffusion coefficient in the forward equation. In more general case, Tang and Li \cite{TangLi94} considered that the control domain is non-convex and diffusion coefficient contains control variable. Moreover, Shi and Wu \cite{ShiWu07}, \cite{ShiWu12} solved the corresponding fully-coupled case, etc. All these previous work were based on the ``complete information'' case, meaning that the control variable is adapted to the truth complete filtration. In reality, there are many cases the controller can only obtains  ``partial information'', reflecting in mathematics that the control variable is adapted to the filtraion generated by an observable process. Based on this phenomenon, Xiong and Zhou \cite{XiongZhou07} dealt with a Mean-Variance problem in financial marcket that the investor's optimal portfolio is only based on the stock and bond process he observed. This assumption of partial information is indeed natural in financing market. What's more, Wang and Wu \cite{WangWu08} considered the Kalman-Bucy filtering equation of FBSDE system. Huang, Wang and Xiong \cite{HuangWangXiong09} dealt with the backward stochastic control system under partial information. Wang and Wu \cite{WangWu09}, Wu \cite{Wu10}, Wang Wu and Xiong \cite{WangWuXiong13} solved the partially observed case of forward and backward stochastic control system.

The game theory was firstly constructed by Von Neumann since 1928. Nash \cite{Nash50Equilibrium} - \cite{Nash53} made the fundamental contribution in the Non-cooperate Games, considered there are N-players acting independently to maximize their own objective conducted. He gave the notion of equilibrium point. Then, Isaacs \cite{Isaacs55}, Basar and Olsder \cite{BasarOlsder82} conducted the game research on differential equation system. Varaiya \cite{Varaiya76}, Eliott and Davis\cite{EliottDavis81} considered  the stochastic case. Next, many articles of forward stochastic differential games which is based on SDEs appeared, like Hamadene \cite{Hamadene95a} - \cite{Hamadene99}, Karoui and Hamadene \cite{KarouiHamadene03}, Wu \cite{Wu05}, {\O}ksendal \cite{TTKOksendal08}, etc. For the backward case, Yu and Ji \cite{YuJi08} studied the Linear Quadratic (LQ) system, Wang and Yu \cite{WangYu10} gave the maximum principle of backward system. \O ksendal and Sulem \cite{OksendalSulem12}, Hui and Xiao \cite{HuiXiao12} had a research on the maximum principle of forward-backward system. Recently, Tang and Meng \cite{TangMeng10} solved the partial information case of zero-sum forward and backward system. Wang and Yu \cite{WangYu12} solved the partial information case of non-zero sum backward system, etc.

In our article, we generate game theory to the partially observed  non-zero sum forward-backward system. The main difference here is that, we suppose every player has his own observation equation, not just as partial information that focusing only on a smaller sub-filtration. In section 1, we introduce some historical contributions and make notions we need. In section 2, we establish the necessary condition of maximum principle for Nash equilibrium point and give a sufficient condition (verification theorem) to help us check if the candidate equilibrium points are real. In section 3, we consider a reasonable financial investment problem and use the theorems in section 2 to obtain the open-loop Nash equilibrium point and give the explicit observable solution of investment strategy.

\subsection{Basic Notions}
Throughout our article, we denote $(\Omega,\mathcal{F},\{\mathcal{F}_{t}\}_{t\geq0},\mathbb{P})$ the complete probability space, on which $(W(\cdot),Y_1(\cdot)$,
$Y_2(\cdot))$
be a standard 3-dimensional $\mathcal{F}_{t}$ Brownian motion. Let $\mathcal{F}_{t}^W,\mathcal{F}_{t}^1,\mathcal{F}_{t}^2$ be the natural filtration generated by $W(\cdot),Y_1(\cdot),Y_2(\cdot)$ respectively. We set $\mathcal{F}_t=\mathcal{F}_{t}^W\otimes\mathcal{F}_{t}^1\otimes\mathcal{F}_{t}^2$. For fixed terminal time $T$, $\mathcal{F}=\mathcal{F}_T$. What's more, we denote the 1-dimensional Euclidean space by $\mathbb{R}$, the Euclidean norm by $|\cdot|$, and the transpose of matrix $A$ by $A^\tau$, the partial derivative of function $f(\cdot)$ with respect to $x$ by $f_x(\cdot)$. We also denote the $\mathbb{L}^2_\mathcal{F}(0,T;S)$ representing the set of $S$-valued, $\mathcal{F}_t$-adapted square integrable process (i.e. $\mathbb{E}\int_0^T|x(t)|^2dt<\infty$), and the $\mathbb{L}^2_\mathcal{F}(\Omega;S)$ representing the set of $S$-valued, $\mathcal{F}$-measured square integrable random variable. In the following discussion, we only consider 1-dimensional case if there is no specific illustration.

\subsection{Problem formulation}
We consider a partially observed stochastic differential game problem of forward-backward stochastic systems, focusing on necessity and sufficiency of the existence of open-loop Nash equilibrium point.

We formulate the controlled forward and backward stochastic differential equation (FBSDE) as

\begin{equation}\label{FBSDE}
\left\{
\begin{aligned}
dx(t)=&b(t,x(t),v_{1}(t),v_{2}(t))dt+\sigma(t,x(t),v_{1}(t),v_{2}(t))dW(t)\\
+&\sigma_{1}(t,x(t),v_{1}(t),v_{2}(t))dW_{1}^{v_1,v_2}(t)+\sigma_2(t,x(t),v_{1}(t),v_{2}(t))dW_2^{v_1,v_2}(t),\\
-dy(t)=&f(t,x(t),y(t),z(t),z_1(t),z_2(t),v_{1}(t),v_{2}(t))dt-z(t)dW(t)-z_1(t)dY_1(t)-z_2(t)dY_2(t),\\
x(0)=&x_0,\\
y(t)=&g(x(T)),
\end{aligned}
\right.
\end{equation}
where $v_1(\cdot),v_2(\cdot)$ are two control processes taking values in convex sets $U_1\subset\mathbb{R},U_2\subset\mathbb{R}$ respectively, $W_1^{v_1,v_2}(\cdot)$ and $W_1^{v_1,v_2}(\cdot)$ are controlled stochastic process taking values in $\mathbb{R}$, $b,\sigma,\sigma_1,\sigma_2: \Omega\times[0,T]\times\mathbb{R}\times U_1\times U_2\mapsto\mathbb{R}$, $f: \Omega\times[0,T]\times\mathbb{R}\times\mathbb{R}\times\mathbb{R}\times\mathbb{R}\times\mathbb{R}\times U_1\times U_2\mapsto\mathbb{R}$, $g:\Omega\times\mathbb{R}\mapsto\mathbb{R}$ are continuous maps, $x_0\in\mathbb{R}$, $g(x(T))$ is a $\mathcal{F}_T$ measurable square integrable random variable. Here for simplicity, we omit the notation of $\omega$ in each process.

We regard $v_1(\cdot),v_2(\cdot)$ as two strategies of player 1 and  2. For both of them, they cannot observe the process $x(\cdot),y(\cdot),z_1(\cdot),z_2(\cdot)$ directly. However, they can observe their own related processes $Y_1(\cdot)$, $Y_2(\cdot)$, which satisfy the following equations \footnote{Here we assume that the control variables $v_1(\cdot),v_2(\cdot)$ explicitly appeared in the observation function $h_i(\cdot)$, which is common in control problems under partial observation (see, e.g.,\cite{WangWuXiong13}).}

\begin{equation}\label{Observe}
\left\{
\begin{aligned}
dY_i(t)=&h_i(t,x(t),v_{1}(t),v_{2}(t))dt+dW_i^{v_1,v_2}(t),\\
Y_i(0)=&0\quad(i=1,2),\\
\end{aligned}
\right.
\end{equation}
where $h_i: \Omega\times[0,T]\times\mathbb{R}\times U_1\times U_2\mapsto\mathbb{R},i=1,2$ is a continuous map. We define the filtration $\mathcal{F}_t^i=\sigma\{Y_i(s)|0\leq s\leq t\},i=1,2$ as the information for player $i$ obtained at time $t$, and the admissible control $v_i(\cdot)$ as
\begin{equation}
v_i(t)\in\mathcal{U}_i=\{v_i(\cdot)\in U_i|v_i(t)\in\mathcal{F}_t^i\ \ \text{and} \sup\limits_{0\leq t\leq T}\mathbb{E}|v_i(t)|^8<\infty,a.e\}\quad(i=1,2),
\end{equation}
where $\mathcal{U}_i,i=1,2$ is called the open-loop admissible control set for player $i$.

\textbf{Hypothesis(H1).} Suppose functions $b,\sigma,\sigma_1,\sigma_2,h_1,h_2,f,g$ are continuously differentiable in $(x,y,z,z_1,z_2,v_1,v_2)$. The partial derivatives $b_x,b_{v_i},\sigma_x,\sigma_{v_i},\sigma_{jx}$,$\sigma_{jv_i},h_{jx},h_{jv_i}$,$f_x,\\
f_y,f_z,f_{z_j},f_{v_i},g_x,i,j=1,2$ are uniformly bounded. Further, we assume there is constant $C$ such that $|h(t,x,v_1,v_2)|$
$+|\sigma_1(t,x,v_1,v_2)|+|\sigma_2(t,x,v_1,v_2)|\leq C$ for $\forall (t,x,v_1,v_2)\in[0,T]\times\mathbb{R}\times U_1\times U_2$.

From the Hypothesis(H1), we can defined a new probability measure $\mathbb{P}^{v_1,v_2}$ by

\begin{equation}
\frac{d\mathbb{P}^{v_1,v_2}}{d\mathbb{P}}\bigg|_{\mathcal{F}_t}=Z^{v_1,v_2}(t),
\end{equation}
where $Z^{v_1,v_2}(\cdot)$ is a $\mathcal{F}_t$-martingale
\begin{equation}
Z^{v_1,v_2}(t)=\exp\big\{\sum_{j=1}^2\int_0^th_j(s,x(s),v_1(s),v_2(s))dY_j(s)-\frac{1}{2}\sum_{j=1}^2\int_0^th^2_j(s,x(s),v_1(s),v_2(s))ds\big\}.
\end{equation}
Equivalently, it can be written in the SDE form
\begin{equation}
\left\{
\begin{aligned}
dZ^{v_1,v_2}(t)=&h_1(t,x(t),v_{1}(t),v_{2}(t))Z^{v_1,v_2}(t)dY_1(t)+h_2(t,x(t),v_{1}(t),v_{2}(t))Z^{v_1,v_2}(t)dY_2(t),\\
Z^{v_1,v_2}(0)=&1.\\
\end{aligned}
\right.
\end{equation}

By using the Girsanov theorem,  $(W(\cdot),W_1^{v_1,v_2}(\cdot),W_2^{v_1,v_2}(\cdot))$ becomes a 3-dimensional standard Brownian motion defined on $(\Omega,\mathcal{F},\{\mathcal{F}_{t}\}_{t\geq0},\mathbb{P}^{v_1,v_2})$, where ($W_1^{v_1,v_2}(\cdot),W_2^{v_1,v_2}(\cdot))$ is a 2-dimensional controlled Brownian motion and $Y_j(\cdot),j=1,2$  turn out to be a stochastic observation process.

Based on the construction above, we define two cost functional under  space $(\Omega,\mathcal{F},\{\mathcal{F}_{t}\}_{t\geq0},\mathbb{P}^{v_1,v_2})$.
\begin{equation}\label{cost functional}
\begin{aligned}
J_i(v_1(\cdot),v_2(\cdot))=\mathbb{E}^{v_1,v_2}[\int_0^Tl_i(t,x(t),y(t),z(t),z_1(t),z_2(t),v_{1}(t),v_{2}(t))dt+\Phi_i(x(T))+\gamma_i(y(0))]\\
=\mathbb{E}[\int_0^TZ^{v_1,v_2}(t)l_i(t,x(t),y(t),z(t),z_1(t),z_2(t),v_{1}(t),v_{2}(t))dt+Z^{v_1,v_2}(T)\Phi_i(x(T))+\gamma_i(y(0))],
\end{aligned}
\end{equation}
for two players $i=1,2$, where $\mathbb{E}^{v_1,v_2}$ is the corresponding expectation. $l_i: \Omega\times[0,T]\times\mathbb{R}\times\mathbb{R}\times\mathbb{R}\times\mathbb{R}\times\mathbb{R}\times U_1\times U_2\mapsto\mathbb{R}$, $\Phi_i:\Omega\times\mathbb{R}\mapsto\mathbb{R}$, $\gamma_i:\mathbb{R}\mapsto\mathbb{R}$ are continuous maps. In this cost functional, it contains the running cost part representing an utility in duration, and the terminal and initial representing the restrict on the endpoints.

\textbf{Hypothesis(H2).}  Suppose functions $l_i$, $\Phi_i$, $\gamma_i,i=1,2$ are continuously differentiable in $(x,y,z,z_1,z_2$,
$v_1,v_2)$, $x$, $y$ respectively, the partial derivatives $l_{ix},l_{iy},l_{iz},l_{iz_j},l_{iv_j},i,j=1,2$ are bounded by $C(1+|y|+|z|+|z_1|+|z_2|+|v_1|+|v_2|)$ where $C$ is a constant.

For each of the player, his goal is to minimise his own cost. Here we set $(u_1,u_2)\in\mathcal{U}_1\times\mathcal{U}_2$ such that
\begin{equation}\label{nzJ}
\left\{
\begin{aligned}
J_1(u_1(\cdot),u_2(\cdot))=\min\limits_{v_1(\cdot)\in \mathcal{U}_1}J_1(v_1(\cdot),u_2(\cdot)),\\
J_2(u_1(\cdot),u_2(\cdot))=\min\limits_{v_2(\cdot)\in \mathcal{U}_2}J_2(u_1(\cdot),v_2(\cdot)).\\
\end{aligned}
\right.
\end{equation}

In this definition, $(u_1,u_2)$ is the well-known open-loop Nash equilibrium point of our partially-observed forward-backward non-zero sum system, and $(x,y,z,z_1,z_2,Z)$ is the corresponding equilibrium state process. Similar to the optimal control, what we want to do is to find this equilibrium control. We denote the whole problem above as \textbf{Problem(NEP)}.

In particular, If we set $J(v_1(\cdot),v_2(\cdot))=J_1(v_1(\cdot),v_2(\cdot))=-J_2(v_1(\cdot),v_2(\cdot))$, then (\ref{nzJ}) is equivalent to
\begin{equation}
J(u_1(\cdot),v_2(\cdot))\leq J(u_1(\cdot),u_2(\cdot))\leq J(v_1(\cdot),u_2(\cdot)),
\end{equation}
for $\forall (v_1(\cdot),v_2(\cdot))\in \mathcal{U}_1\times\mathcal{U}_2$.

In that case, the reward of player 1 is actually the cost of player 2, and the sum is always zero. We can regard it as a special case of non-zero sum game. We define this problem of our system as \textbf{Problem(EP)}.
\begin{remark}\label{circle}
If we at first suppose $W_j(\cdot)=W_j^{v_1,v_2}(\cdot),j=1,2$ to be a $\mathcal{F}_t$-Brownian motion under $\mathbb{P}$, then the distribution of observation process $Y(\cdot)$ will be depending on the control process. In that way, our admission control is adapted to a controlled filtration, which appears a circulation. Here, we break through the circulation by Girsanov theorem, making observation process to be an uncontrolled stochastic process and depict the controlled Brownian motion under related equivalent probability measure.
\end{remark}

\section{Maximum principle}
In this section, we will establish the necessary condition (maximum principle) of existence of open-loop Nash equilibrium point in problem (NEP), and give a sufficient condition (verification theorem) of a special class of system.

\subsection{Variational equation}
Let $(v_1(\cdot),v_2(\cdot))\in\mathbb{L}^8_{\mathcal{F}^1}(0,T;\mathbb{R})\times\mathbb{L}^8_{\mathcal{F}^2}(0,T;\mathbb{R})$ such that $(u_1(\cdot)+v_1(\cdot),u_2(\cdot)+v_2(\cdot))\in\mathcal{U}_1\times\mathcal{U}_2$.

For any $\epsilon\in[0,1]$, we make the variational controls as

\begin{equation}
\begin{aligned}
u_1^{\epsilon}(\cdot)=u_1(\cdot)+\epsilon v_1(\cdot),\\
u_2^{\epsilon}(\cdot)=u_2(\cdot)+\epsilon v_2(\cdot).
\end{aligned}
\end{equation}

Because $\mathcal{U}_1,\mathcal{U}_2$ are convex sets, we have $(u_1^{\epsilon}(\cdot),u_2^{\epsilon}(\cdot))\in\mathcal{U}_1\times\mathcal{U}_2$. We denote
\begin{equation*}
\phi^{u_i^\epsilon}(\cdot),\quad\phi=x,y,z,z_1,z_2,Z\quad (i=1,2),
\end{equation*}

as the corresponding state processes of variation $(u_1^\epsilon,u_2)$ or $(u_1,u_2^\epsilon)$.

It is noteworthy that when using the variational technique, we had better require the Brownian motion do not affected by the control process. Then our state equation can be written as

\begin{equation}\label{NewState}
\left\{
\begin{aligned}
dx(t)=&\Big[b(t,x(t),v_{1}(t),v_{2}(t))-\sum_{j=1}^2\sigma_j(t,x(t),v_{1}(t),v_{2}(t))h_j(t,x(t),v_{1}(t),v_{2}(t))\Big]dt\\
+&\sigma(t,x(t),v_{1}(t),v_{2}(t))dW(t)+\sum_{j=1}^2\sigma_{j}(t,x(t),v_{1}(t),v_{2}(t))dY_{j}(t),\\
-dy(t)=&f(t,x(t),y(t),z(t),z_1(t),z_2(t),v_{1}(t),v_{2}(t))dt-z(t)dW(t)-\sum_{j=1}^2z_j(t)dY_j(t),\\
x(0)=&x_0,\\
y(t)=&g(x(T)),
\end{aligned}
\right.
\end{equation}
where $(W(\cdot),Y_1(\cdot),Y_2(\cdot))$ is $\mathcal{F}_t$-Brownian motion under $\mathbb{P}$.

Then we have the following estimates under Hypothesis (H1).
\begin{lem}
\begin{equation}
\sup\limits_{0\leq t\leq T}\mathbb{E}|x(t)|^8\leq C(1+\sup\limits_{0\leq t\leq T}\mathbb{E}|v(t)|^8),
\end{equation}

\begin{equation}
\sup\limits_{0\leq t\leq T}\mathbb{E}|y(t)|^2\leq C(1+\sup\limits_{0\leq t\leq T}\mathbb{E}|v(t)|^2),
\end{equation}

\begin{equation}
\mathbb{E}\big(\int_0^T|z(t)|^2dt+\int_0^T|z_1(t)|^2dt+\int_0^T|z_2(t)|^2dt\big)\leq C(1+\sup\limits_{0\leq t\leq T}\mathbb{E}|v(t)|^2),
\end{equation}

\begin{equation}
\sup\limits_{0\leq t\leq T}\mathbb{E}|Z^{v_1,v_2}(t)|\leq K,
\end{equation}
where $C,K$ is constant independent of $\epsilon$.
\end{lem}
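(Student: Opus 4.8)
The plan is to derive these four a priori bounds in the order in which they are listed, using the standard moment-estimate toolkit for (decoupled) FBSDEs together with the boundedness assumptions in Hypothesis (H1). Before any of this, note that the forward equation in \eqref{NewState} is in fact decoupled from the backward one (the coefficients $b,\sigma,\sigma_1,\sigma_2,h_j$ depend only on $(t,x,v_1,v_2)$), so I can treat $x(\cdot)$ first in isolation, then $Z^{v_1,v_2}(\cdot)$, and only afterwards the pair $(y,z,z_1,z_2)$, which is driven by the already-controlled $x(\cdot)$.

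\textbf{Step 1 (eighth moment of $x$).} Apply It\^o's formula to $|x(t)|^8$, take expectations, and use the Burkholder--Davis--Gundy inequality on the martingale parts coming from $dW$, $dY_1$, $dY_2$. Hypothesis (H1) gives $|b_x|,|\sigma_x|,|\sigma_{jx}|,|h_{jx}|$ bounded, hence linear growth of $b-\sum_j\sigma_jh_j$, $\sigma$, $\sigma_j$ in $x$ (uniformly in $(t,\omega,v_1,v_2)$), while the extra assumption that $|\sigma_1|,|\sigma_2|,|h_j|$ are themselves bounded by $C$ makes the $dY_j$-integrands bounded by $C(1+|x|)$ as well; the drift term $b$ contributes a term controlled by $C(1+|x|+|v_1|+|v_2|)$. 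Collecting terms yields
\begin{equation*}
\mathbb{E}|x(t)|^8\leq C\Big(1+\mathbb{E}|x_0|^8+\int_0^t\mathbb{E}|x(s)|^8\,ds+\int_0^t\mathbb{E}|v(s)|^8\,ds\Big),
\end{equation*}
and Gronwall's lemma, followed by taking the supremum over $t\in[0,T]$, gives the first estimate with $C$ depending only on $T$ and the structural constants, hence independent of $\epsilon$.

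\textbf{Step 2 (first moment of $Z$).} Since $Z^{v_1,v_2}$ is a nonnegative local martingale with $Z^{v_1,v_2}(0)=1$, it is a supermartingale, so $\mathbb{E}Z^{v_1,v_2}(t)\leq 1$ for all $t$; to upgrade this to the stated uniform bound $K$ one uses the boundedness of $h_1,h_2$ from (H1): a Novikov-type argument (or a direct computation of $\mathbb{E}[Z^{v_1,v_2}(t)]^p$ for some $p>1$ via the exponential-martingale formula, using $|h_j|\leq C$) shows $Z^{v_1,v_2}$ is a true martingale and even $L^p$-bounded uniformly in $(t,v_1,v_2)$, giving $\sup_t\mathbb{E}|Z^{v_1,v_2}(t)|\leq K$ with $K$ absolute. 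This step is essentially independent of Step 1 because the bounds on $h_j$ do not involve $x$.

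\textbf{Step 3 (second moment of $y$ and energy of $(z,z_1,z_2)$).} Apply It\^o to $|y(t)|^2$ between $t$ and $T$, with terminal condition $y(T)=g(x(T))$; (H1) gives $|g_x|$ bounded so $\mathbb{E}|g(x(T))|^2\leq C(1+\mathbb{E}|x(T)|^2)$, already controlled by Step 1. The generator satisfies $|f(t,x,y,z,z_1,z_2,v_1,v_2)|\leq C(1+|x|+|y|+|z|+|z_1|+|z_2|+|v_1|+|v_2|)$ by boundedness of its partials. Taking expectations, the martingale terms vanish, the $\int_0^T(|z|^2+|z_1|^2+|z_2|^2)\,ds$ term appears with a good sign on the left, and the cross term $2y f\,dt$ is handled by Young's inequality: choose $\varepsilon$ so the $|z|^2$-type contributions are absorbed into the left side, leaving
\begin{equation*}
\mathbb{E}|y(t)|^2+\tfrac12\mathbb{E}\!\int_t^T\!\big(|z|^2+|z_1|^2+|z_2|^2\big)ds\leq C\Big(1+\sup_{s}\mathbb{E}|v(s)|^2+\int_t^T\mathbb{E}|y(s)|^2 ds\Big),
\end{equation*}
using Step 1 to bound $\sup_s\mathbb{E}|x(s)|^2$. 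Backward Gronwall then gives the $y$-bound, and feeding it back gives the $(z,z_1,z_2)$ energy bound.

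\textbf{Main obstacle.} The routine parts are Steps 1 and 3 (classical SDE/BSDE moment estimates). The only genuinely delicate point is Step 2: proving that $Z^{v_1,v_2}$ is a \emph{true} martingale with a moment bound \emph{uniform in the controls}, rather than merely a supermartingale. This is exactly what the uniform bound $|h_j|\leq C$ in (H1) is designed to supply (it makes Novikov's condition hold with a constant independent of $v_1,v_2$), and it is also what justifies the Girsanov change of measure used throughout the paper; so I would make sure to state that implication carefully. A secondary subtlety is keeping every constant independent of $\epsilon$: this is automatic provided we only ever use the structural bounds from (H1)--(H2) and the assumption $\sup_t\mathbb{E}|v(t)|^8<\infty$ built into $\mathcal{U}_i$, never any quantity tied to a particular variational perturbation.
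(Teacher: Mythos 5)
Your proposal is correct, but there is nothing in the paper to compare it against: the authors state this lemma without any proof, evidently regarding these as standard a priori estimates, so your argument fills a gap rather than paralleling an existing one. The route you take is the natural one and it works: the forward equation in the reformulated system \eqref{NewState} is decoupled from the backward one, so It\^o plus BDG plus Gronwall on $|x|^8$ gives the first bound (using that $b,\sigma$ have linear growth from the bounded partials and that $\sigma_j,h_j$ are bounded outright), the exponential-martingale computation with $|h_j|\leq C$ gives uniform $L^p$ bounds on $Z^{v_1,v_2}$, and the standard It\^o--Young--backward-Gronwall argument on $|y|^2$ delivers the last two estimates once $\mathbb{E}|g(x(T))|^2$ is controlled by Step 1. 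One small economy you could note: since $Z^{v_1,v_2}$ is a nonnegative local martingale started at $1$, the supermartingale property alone already yields $\sup_t\mathbb{E}|Z^{v_1,v_2}(t)|\leq 1$, so the fourth display needs none of the Novikov machinery; the true-martingale and higher-moment statements are of course still needed for the Girsanov change of measure and for the $L^2$ estimates on $Z^{u_i^\epsilon}-Z$ in the subsequent lemmas, so your care there is not wasted, just not required for this particular inequality.
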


\begin{lem}
\begin{equation}
\sup\limits_{0\leq t\leq T}\mathbb{E}|x^{u_i^\epsilon}(t)-x^{}(t)|^8\leq C\epsilon^8,
\end{equation}

\begin{equation}
\sup\limits_{0\leq t\leq T}\mathbb{E}|y^{u_i^\epsilon}(t)-y(t)|^2\leq C\epsilon^2,
\end{equation}

\begin{equation}
\mathbb{E}\int_0^T|z^{u_i^\epsilon}(t)-z(t)|^2dt\leq C\epsilon^2,
\end{equation}

\begin{equation}
\mathbb{E}\int_0^T|z_1^{u_i^\epsilon}(t)-z_1(t)|^2dt\leq C\epsilon^2,
\end{equation}

\begin{equation}
\mathbb{E}\int_0^T|z_2^{u_i^\epsilon}(t)-z_2(t)|^2dt\leq C\epsilon^2,
\end{equation}

\begin{equation}
\sup\limits_{0\leq t\leq T}\mathbb{E}|Z^{u_i^\epsilon}(t)-Z(t)|^2\leq C\epsilon^2,
\end{equation}
for $i=1,2.$, where $C$ is constant independent of $\epsilon$.
\end{lem}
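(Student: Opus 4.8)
The plan is to establish these stability estimates by a standard coupling/perturbation argument, treating the forward SDE, the backward SDE, and the density process $Z$ in turn. First I would write down the difference equation for $\hat{x}(\cdot):=x^{u_i^\epsilon}(\cdot)-x(\cdot)$ by subtracting the state equation \eqref{NewState} evaluated at the perturbed control from the one at the equilibrium control. Since $x^{u_i^\epsilon}(0)=x(0)=x_0$, the initial condition for $\hat{x}$ vanishes, and the drift/diffusion differences can be expanded using the mean-value theorem (or written as integrals of partial derivatives along a segment between the two trajectories). The crucial point is that all the coefficient derivatives $b_x,b_{v_i},\sigma_x,\sigma_{v_i},\sigma_{jx},\sigma_{jv_i},h_{jx},h_{jv_i}$ are uniformly bounded by Hypothesis (H1), so that the $\hat{x}$-terms are controlled by a Lipschitz constant and the $v_i$-terms appear linearly with a factor $\epsilon$ (from $u_i^\epsilon-u_i=\epsilon v_i$). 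Applying Itô's formula to $|\hat{x}(t)|^8$, taking expectations, using the boundedness of $h_j,\sigma_j$ to handle the $dY_j$ integrals, and invoking the Burkholder--Davis--Gundy inequality for the martingale parts, I would arrive at
\begin{equation*}
\mathbb{E}|\hat{x}(t)|^8\leq C\int_0^t\mathbb{E}|\hat{x}(s)|^8\,ds+C\epsilon^8\,\mathbb{E}\int_0^T|v_i(s)|^8\,ds,
\end{equation*}
and Gronwall's lemma together with $v_i\in\mathbb{L}^8$ yields the first estimate $\sup_t\mathbb{E}|\hat{x}(t)|^8\leq C\epsilon^8$.

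Next I would handle the density process. Writing $\hat{Z}(\cdot):=Z^{u_i^\epsilon}(\cdot)-Z(\cdot)$ and subtracting the linear SDEs for $Z^{u_i^\epsilon}$ and $Z$, the difference solves a linear SDE driven by $dY_1,dY_2$ whose coefficients involve $h_j(s,x^{u_i^\epsilon},u_i^\epsilon,u_j)\hat Z$ plus a remainder term $\big(h_j(s,x^{u_i^\epsilon},u_i^\epsilon,u_j)-h_j(s,x,u_1,u_2)\big)Z$. Using the uniform bound on $h_j$ from (H1), the already-established eighth-moment bound on $\hat x$, the bound on $Z$ from Lemma 2.1, and a Hölder inequality to split the product, an Itô-$|\hat Z|^2$ estimate plus Gronwall gives $\sup_t\mathbb{E}|\hat Z(t)|^2\leq C\epsilon^2$. (One uses here that $\sup_t\mathbb{E}|Z(t)|^q$ is finite for $q>2$ as well, which follows from the same Girsanov/BDG machinery under the uniform bound on $h_j$.)

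For the backward equation I would set $\hat y(\cdot):=y^{u_i^\epsilon}(\cdot)-y(\cdot)$, $\hat z,\hat z_1,\hat z_2$ analogously, and subtract the two BSDEs; the terminal condition becomes $\hat y(T)=g(x^{u_i^\epsilon}(T))-g(x(T))$, which by the mean-value theorem and boundedness of $g_x$ satisfies $\mathbb{E}|\hat y(T)|^2\leq C\,\mathbb{E}|\hat x(T)|^2\leq C\epsilon^2$. The generator difference $f(\cdot,x^{u_i^\epsilon},y^{u_i^\epsilon},\dots)-f(\cdot,x,y,\dots)$ expands, via bounded $f_x,f_y,f_z,f_{z_j},f_{v_i}$, into a Lipschitz part in $(\hat y,\hat z,\hat z_1,\hat z_2)$ plus a driving term controlled by $\mathbb{E}\int_0^T|\hat x|^2+\epsilon^2|v_i|^2\,dt\leq C\epsilon^2$. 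Then the standard a priori estimate for linear BSDEs (Itô's formula on $|\hat y(t)|^2$, absorbing the $\hat z,\hat z_j$ quadratic-variation terms on the left, Gronwall) delivers simultaneously $\sup_t\mathbb{E}|\hat y(t)|^2\leq C\epsilon^2$ and $\mathbb{E}\int_0^T(|\hat z|^2+|\hat z_1|^2+|\hat z_2|^2)\,dt\leq C\epsilon^2$, which are precisely the remaining five estimates.

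The main obstacle I anticipate is the interplay between the high integrability exponent ($8$) demanded for $x$ and $Z$ and the merely quadratic integrability available for $y,z,z_1,z_2$: one must be careful that the generator $f$ and the coefficients only enter the $y,z,z_j$-estimates through terms that are genuinely $O(\epsilon^2)$ in $\mathbb{L}^2$, which is why the eighth-moment bound on $\hat x$ is needed first (so that, e.g., $\mathbb{E}|\hat x(T)|^2\le (\mathbb{E}|\hat x(T)|^8)^{1/4}\le C\epsilon^2$ with room to spare, and products like $\hat x\cdot Z$ or $\hat x\cdot\hat Z$ can be split by Hölder without losing the $\epsilon^2$ rate). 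Handling the stochastic integrals with respect to $Y_j$ — which under $\mathbb{P}$ are genuine $\mathcal{F}_t$-Brownian motions by the reformulation \eqref{NewState} — is routine once the uniform bounds from (H1) are in place, so no real difficulty arises there.
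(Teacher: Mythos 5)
Your argument is correct and is exactly the standard route the authors implicitly rely on: the paper states this lemma without proof (treating it, like Lemma 2.1, as a ``well-known'' a priori stability estimate), and your plan --- difference equations, mean-value expansion of the coefficients using the uniform derivative bounds of (H1), It\^o plus Gronwall at eighth order for $\hat{x}$, the exponential-martingale moment bounds for $Z$ and $\hat{Z}$, and the standard linear-BSDE a priori estimate for $(\hat{y},\hat{z},\hat{z}_1,\hat{z}_2)$ --- fills that gap correctly. Your remark about needing the eighth-moment bound on $\hat{x}$ first, so that H\"older splittings of products such as $\hat{x}\cdot Z$ do not degrade the $\epsilon^2$ rate, is precisely the point that makes the chosen integrability exponents consistent.
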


For notation simplicity, we set
\[\zeta(t)=\zeta(t,x(t),u_{1}(t),u_{2}(t)),\  \text{for}\ \ \zeta=b, \sigma, \sigma_i,h_i\quad (i=1,2),\]
\[\psi(t)=\psi(t,x(t),y(t),z(t),z_1(t),z_2(t),u_{1}(t),u_{2}(t)),\ \text{for}\  \psi=f,l_i\quad (i=1,2).\]

We introduce the following variational equations
\begin{equation}\label{Variational xy}
\left\{
\begin{aligned}
dx_i^1(t)=&\big\{[b_x(t)-\sum_{j=1}^2(\sigma_{jx}(t)h_j(t)+\sigma_j(t)h_{jx}(t))]x_i^1(t)+[b_{v_i}(t)-\sum_{j=1}^2(\sigma_{jv_{i}}(t)h_j(t)+\sigma_j(t)h_{jv_{i}}(t))]v_i(t)\big\}dt\\
+&[\sigma_{x}(t)x_i^1(t)+\sigma_{v_i}(t)v_i(t)]dW(t)+\sum_{j=1}^2[\sigma_{jx}(t)x^1_i(t)+\sigma_{jv_i}(t)v_i(t)]dY_j(t),\\
-dy_i^1(t)=&[f_x(t)x_i^1(t)+f_y(t)y_i^1(t)+f_z(t)z_i^1(t)+\sum_{j=1}^2f_{z_j}(t)z_{ji}^1(t)+f_{v_i}(t)v_i(t)]dt-z_i^1(t)dW(t)-\sum_{j=1}^2z_{ji}^1(t)dY_j(t),\\
x_i^1(0)=&0,\\
y_i^1(T)=&g_x(x(T))x_i^1(T)\quad(i=1,2),
\end{aligned}
\right.
\end{equation}
and

\begin{equation}\label{Variational z}
\left\{
\begin{aligned}
dZ_i^1(t)=&\sum_{j=1}^2\big[Z_i^1(t)h_j(t)+Z(t)(h_{jx}(t)x_i^1(t)+h_{jv_i}(t)v_i(t))\big]dY_j(t),\\
Z_i^1(0)=&0\quad(i=1,2).\\
\end{aligned}
\right.
\end{equation}

From Hypothesis (H1), we know that (\ref{Variational xy}) and (\ref{Variational z}) exist a unique solution respectively.

Next, we make the notation
\[
\phi_i^{\epsilon}(t)=\frac{\phi^{u_i^{\epsilon}}(t)-\phi(t)}{\epsilon}-\phi_i^1(t),\ \ \text{for}\ \ \phi=x,y,z,z_1,z_2,Z\quad(i=1,2),
\]
and
\[
\bar{\phi}(t)=\phi^{u_i^{\epsilon}}(t)-\phi(t),\ \ \text{for}\ \ \phi=x,y,z,z_1,z_2,Z,b,\sigma,\sigma_1,\sigma_2,h_1,h_2,Z\quad(i=1,2).
\]
Then we have
\begin{lem}\label{convergence}
For $i,j=1,2$,
\begin{equation}
\lim\limits_{\epsilon\rightarrow0}\sup\limits_{0\leq t\leq T}\mathbb{E}|x_i^\epsilon(t)|^4=0,
\end{equation}

\begin{equation}
\lim\limits_{\epsilon\rightarrow0}\sup\limits_{0\leq t\leq T}\mathbb{E}|Z_i^\epsilon(t)|^2=0,
\end{equation}

\begin{equation}
\lim\limits_{\epsilon\rightarrow0}\sup\limits_{0\leq t\leq T}\mathbb{E}|y_i^\epsilon(t)|^2=0,
\end{equation}

\begin{equation}
\lim\limits_{\epsilon\rightarrow0}\mathbb{E}\int_0^T|z_i^\epsilon(t)|^2dt=0,
\end{equation}

\begin{equation}
\lim\limits_{\epsilon\rightarrow0}\mathbb{E}\int_0^T|z_{ji}^\epsilon(t)|^2dt=0.
\end{equation}
\begin{proof}
We only consider the first two case for $i=1$. The rest are similar and are well-known results.

(i).
\begin{equation}
\begin{aligned}
dx_1^\epsilon(t)=&\bigg\{\big(\frac{\bar{b}(t)}{\epsilon}-b_x(t)x^1_1(t)-b_{v_i}(t)v_i(t)\big)-\sum_{j=1}^2\bigg[\big(\frac{\bar{\sigma}_j(t)}{\epsilon}-\sigma_{jx}(t)x_1^1(t)-\sigma_{jv_1}(t)v_1(t)\big)h_j(t)\\
+&\big(\frac{\bar{h}_j(t)}{\epsilon}-h_{jx}(t)x_1^1(t)-h_{jv_1}(t)v_1(t)\big)\sigma_j(t)+\frac{\bar{\sigma}_j(t)}{\epsilon}\bar{h}_j(t)\bigg]\bigg\}dt\\
+&\big(\frac{\bar{\sigma}(t)}{\epsilon}-\sigma_{x}(t)x_1^1(t)-\sigma_{v_1}(t)v_1(t)\big)dW(t)+\sum_{j=1}^2\big(\frac{\bar{\sigma}_j(t)}{\epsilon}-\sigma_{jx}(t)x_1^1(t)-\sigma_{jv_1}(t)v_1(t)\big)dY_j(t)\\
=&\bigg[x_1^\epsilon(t)b_x(t)+\frac{\bar{x}(t)}{\epsilon}\big(\int_0^1b_x(\Theta)d\lambda-b_x(t)\big)+v_1(t)\big(\int_0^1b_{v_1}(\Theta)d\lambda-b_{v_1}(t)\big)\bigg]dt\\
+&\bigg[x_1^\epsilon(t)\sigma_x(t)+\frac{\bar{x}(t)}{\epsilon}\big(\int_0^1\sigma_x(\Theta)d\lambda-\sigma_x(t)\big)+v_1(t)\big(\int_0^1\sigma_{v_1}(\Theta)d\lambda-\sigma_{v_1}(t)\big)\bigg]dW(t)\\
+&\sum_{j=1}^2\bigg\{\bigg[x_1^\epsilon(t)\sigma_{jx}(t)+\frac{\bar{x}(t)}{\epsilon}\big(\int_0^1\sigma_{jx}(\Theta)d\lambda-\sigma_{jx}(t)\big)+v_1(t)\big(\int_0^1\sigma_{jv_1}(\Theta)d\lambda-\sigma_{jv_1}(t)\big)\bigg]h_j(t)dt\\
+&\bigg[x_1^\epsilon(t)h_{jx}(t)+\frac{\bar{x}(t)}{\epsilon}\big(\int_0^1h_{jx}(\Theta)d\lambda-h_{jx}(t)\big)+v_1(t)\big(\int_0^1h_{jv_1}(\Theta)d\lambda-h_{jv_1}(t)\big)\bigg]\sigma_j(t)dt\\
+&\bigg[\frac{\bar{x}(t)}{\epsilon}\int_0^1\sigma_{jx}(\Theta)d\lambda+v_1(t)\int_0^1\sigma_{jv_1}(\Theta)d\lambda\bigg]\bar{h}_j(t)\bigg\}dt,
\end{aligned}
\end{equation}
where \[(\Theta)=(t,x(t)+\lambda\bar{x}(t),u_1(t)+\lambda\epsilon v(t),u_2(t)).\]

Thus we have
\begin{equation}
\begin{aligned}
\mathbb{E}|x_1^\epsilon(t)|^4\leq& C\mathbb{E}\int_0^t|x_1^\epsilon(s)|^4ds+C\sqrt{\mathbb{E}\int_0^t|\frac{\bar{x}(s)}{\epsilon}|^8ds}\bigg[\sqrt{\mathbb{E}\int_0^t\big(\int_0^1b_x(\Theta)d\lambda-b_x(s)\big)^8ds}\\
+&\sqrt{\mathbb{E}\int_0^t\big(\int_0^1\sigma_x(\Theta)d\lambda-\sigma_x(s)\big)^8ds}+\sum_{j=1}^2\sqrt{\mathbb{E}\int_0^t\big(\int_0^1\sigma_{jx}(\Theta)d\lambda-\sigma_{jx}(s)\big)^8ds}\\
+&\sum_{j=1}^2\sqrt{\mathbb{E}\int_0^t\big(\int_0^1h_{jx}(\Theta)d\lambda-h_{jx}(s)\big)^8ds}\bigg]\\
+&C\sqrt{\mathbb{E}\int_0^t|v_1(s)|^8ds}\bigg[\sqrt{\mathbb{E}\int_0^t\big(\int_0^1b_{v_1}(\Theta)d\lambda-b_{v_1}(s)\big)^8ds}\\
+&\sqrt{\mathbb{E}\int_0^t\big(\int_0^1\sigma_{v_1}(\Theta)d\lambda-\sigma_{v_1}(s)\big)^8ds}+\sum_{j=1}^2\sqrt{\mathbb{E}\int_0^t\big(\int_0^1\sigma_{jv_1}(\Theta)d\lambda-\sigma_{jv_1}(s)\big)^8ds}\\
+&\sum_{j=1}^2\sqrt{\mathbb{E}\int_0^t\big(\int_0^1h_{jv_1}(\Theta)d\lambda-h_{jv_1}(s)\big)^8ds}\bigg]\\
+&C\big(\sqrt{\mathbb{E}\int_0^t|\frac{\bar{x}(s)}{\epsilon}|^8ds}+\sqrt{\mathbb{E}\int_0^t|v_1(s)|^8ds}\big)\sqrt{\mathbb{E}\int_0^t\bar{x}(s)ds},
\end{aligned}
\end{equation}
where $C$ is a constant.
From Hypothesis (H1), we know that the right side of the inequality converges to 0 when $\epsilon\mapsto 0$. The case of $i=2$ is similar.

(ii).
\begin{equation}
\begin{aligned}
dZ_1^\epsilon(t)=&\sum_{j=1}^2\bigg[\frac{h_j^{u_1^\epsilon}(t)Z^{u_1^\epsilon}(t)-h_j(t)Z(t)}{\epsilon}-Z_1^1(t)h_j(t)-Z(t)h_{jx}(t)x_1^1(t)-Z(t)h_{jv_1}(t)v_1(t)\bigg]dY_j(t)\\
=&\sum_{j=1}^2\bigg[\frac{Z^{u_1^\epsilon}(t)-Z(t)}{\epsilon}h_j(t)+Z^{u_1^\epsilon}(t)\frac{h_j^{u_1^\epsilon}(t)-h_j(t)}{\epsilon}-Z_1^1(t)h_j(t)-Z(t)h_{jx}(t)x_1^1(t)\\
&\ \ \ \ \ \ -Z(t)h_{jv_1}(t)v_1(t)\bigg]dY_j(t)\\
=&\sum_{j=1}^2\bigg[Z_1^\epsilon(t)h_j(t)+Z(t)\frac{\bar{h}_j(t)}{\epsilon}-Z(t)h_{jx}(t)x_1^1(t)-Z(t)h_{jv_1}(t)v_1(t)+\frac{\bar{Z}(t)}{\epsilon}\bar{h}_j(t)\bigg]dY_j(t)\\
=&\sum_{j=1}^2\bigg[Z_1^\epsilon(t)h_j(t)+Z(t)\bigg(x_1^\epsilon(t)h_{jx}(t)+\frac{\bar{x}(t)}{\epsilon}\big(\int_0^1h_{jx}(\Theta)d\lambda-h_{jx}(t)\big)\\
&\ \ \ \ \ \ +v_1(t)\big(\int_0^1h_{jv_1}(\Theta)d\lambda-h_{jv_1}(t)\big)\bigg)+\big(Z_1^\epsilon(t)+Z_1^1(t)\big)\bar{h}_j(t)\bigg]dY_j(t)\\
=&\sum_{j=1}^2\bigg[Z_1^\epsilon(t)h_j^{u_1^\epsilon}(t)+Z(t)\bigg(x_1^\epsilon(t)h_{jx}(t)+\frac{\bar{x}(t)}{\epsilon}\big(\int_0^1h_{jx}(\Theta)d\lambda-h_{jx}(t)\big)\\
&\ \ \ \ \ \ +v_1(t)\big(\int_0^1h_{jv_1}(\Theta)d\lambda-h_{jv_1}(t)\big)\bigg)+Z_1^1(t)\bar{h}_j(t)\bigg]dY_j(t).\\
\end{aligned}
\end{equation}

Thus we have
\begin{equation}
\begin{aligned}
\mathbb{E}|Z_1^\epsilon(t)|^2\leq& C\bigg[\mathbb{E}\int_0^t|Z_1^\epsilon(s)|^2ds+\mathbb{E}\int_0^t|Z(s)x_1^\epsilon(s)|^2ds\\
+&\mathbb{E}\int_0^t|Z(s)\frac{\bar{x}(s)}{\epsilon}|^2\Big(\int_0^1h_{jx}(\Theta)d\lambda-h_{jx}(s)\Big)^2ds\\
+&\mathbb{E}\int_0^t|v_1(s)|^2\Big(\int_0^1h_{jv_1}(\Theta)d\lambda-h_{jv_1}(s)\Big)^2ds+\mathbb{E}\int_0^t|Z_1^1(s)\bar{h}_j(s)|^2ds\\
\leq&C\bigg[\mathbb{E}\int_0^t|Z_1^\epsilon(s)|^2ds+\sqrt{\mathbb{E}\int_0^t|x_1^\epsilon(s)|^4ds}\\
+&\sqrt[4]{\mathbb{E}\int_0^t|\frac{\bar{x}(s)}{\epsilon}|^8ds}\sqrt{\mathbb{E}\int_0^t\big(\int_0^1h_{jx}(\Theta)d\lambda-h_{jx}(s)\big)^4ds}\\
+&\sqrt{\mathbb{E}\int_0^t|v_1(s)|^4ds}\sqrt{\mathbb{E}\int_0^t\big(\int_0^1h_{jv_1}(\Theta)d\lambda-h_{jv_1}(s)\big)^4ds}\\
+&\sqrt{\mathbb{E}\int_0^t|Z_1^1(s)|^4ds}\sqrt{\mathbb{E}\int_0^t|\bar{x}(s)|^4ds},
\end{aligned}
\end{equation}
where $C$ is a constant.
From Hypothesis (H1), we know that the right side of the inequality converges to 0 when $\epsilon\mapsto 0$. The case of $i=2$ is similar.
\end{proof}
\end{lem}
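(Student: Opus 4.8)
The plan is to prove the five limits in the order stated, reducing throughout to $i=1$ since the case $i=2$ is obtained verbatim by interchanging the two players. The common mechanism is this: $\phi^{u_1^\epsilon}$ and $\phi$ solve the same equation in (\ref{NewState}) (resp.\ the same equation for $Z$) driven by the controls $(u_1+\epsilon v_1,u_2)$ and $(u_1,u_2)$; subtracting, dividing by $\epsilon$, and subtracting the variational equation (\ref{Variational xy})--(\ref{Variational z}) shows that $\phi_1^\epsilon$ solves a \emph{linear} SDE (resp.\ BSDE) whose coefficients are the uniformly bounded derivatives $b_x,\sigma_x,\sigma_{jx},h_{jx},f_y,f_z,f_{z_j}$ — so that Gronwall's inequality is available — together with an inhomogeneous remainder assembled, via the fundamental theorem of calculus and the identity $\bar\phi(t)/\epsilon=\phi_1^\epsilon(t)+\phi_1^1(t)$, from terms of the three types
\[
\frac{\bar x(t)}{\epsilon}\Big(\int_0^1\zeta_x(\Theta)\,d\lambda-\zeta_x(t)\Big),\qquad v_1(t)\Big(\int_0^1\zeta_{v_1}(\Theta)\,d\lambda-\zeta_{v_1}(t)\Big),\qquad \frac{\bar\sigma_j(t)}{\epsilon}\,\bar h_j(t),
\]
where $\Theta=(t,x(t)+\lambda\bar x(t),u_1(t)+\lambda\epsilon v_1(t),u_2(t))$. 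The entire proof then reduces to showing the relevant $L^p$-norm of this remainder tends to $0$ as $\epsilon\to0$.

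For the $x$-component I would write out $dx_1^\epsilon(t)$ as above, apply It\^o's formula to $|x_1^\epsilon(t)|^4$, take expectations, and estimate the stochastic integrals by Burkholder--Davis--Gundy and the remainder by H\"older's inequality, arriving at $\mathbb{E}|x_1^\epsilon(t)|^4\le C\int_0^t\mathbb{E}|x_1^\epsilon(s)|^4\,ds+C\rho(\epsilon)$, where $\rho(\epsilon)$ is a finite sum of products of a factor of the form $\big(\mathbb{E}\int_0^T|\bar x(s)/\epsilon|^8ds\big)^{1/2}$ or $\big(\mathbb{E}\int_0^T|v_1(s)|^8ds\big)^{1/2}$ with a factor of the form $\big(\mathbb{E}\int_0^T(\int_0^1\zeta_x(\Theta)d\lambda-\zeta_x(s))^8ds\big)^{1/2}$ or its $\zeta_{v_1}$-analogue (the cross term $\frac{\bar\sigma_j}{\epsilon}\bar h_j$ carries an extra factor $\bar x$ or $\epsilon$ and is handled the same way). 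By the preceding difference estimates, $\sup_t\mathbb{E}|\bar x(t)|^8\le C\epsilon^8$, so $\mathbb{E}\int_0^T|\bar x(s)/\epsilon|^8ds\le C$ uniformly in $\epsilon$, and $v_1\in\mathbb{L}^8$ by admissibility; hence the first factors are bounded. Gronwall then gives $\sup_t\mathbb{E}|x_1^\epsilon(t)|^4\le C\rho(\epsilon)e^{CT}$, so it suffices to prove $\rho(\epsilon)\to0$.

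This last convergence is the heart of the matter and is exactly where Hypothesis (H1) is used in full. Since $\mathbb{E}\int_0^T|\bar x(s)|^8ds\le CT\epsilon^8\to0$, we have $\bar x\to0$ in $L^8([0,T]\times\Omega)$, so along any sequence $\epsilon_n\to0$ there is a subsequence along which $\bar x(s,\omega)\to0$ for a.e.\ $(s,\omega)$, while $u_1(s,\omega)+\lambda\epsilon_n v_1(s,\omega)\to u_1(s,\omega)$ for every $\lambda$; by continuity of $\zeta_x$ (which holds since $\zeta$ is continuously differentiable) this gives $\zeta_x(\Theta)\to\zeta_x(s)$ a.e.\ in $(s,\omega,\lambda)$, hence $\int_0^1\zeta_x(\Theta)d\lambda\to\zeta_x(s)$ a.e.; and since $|\int_0^1\zeta_x(\Theta)d\lambda-\zeta_x(s)|\le 2\sup|\zeta_x|<\infty$, the bounded convergence theorem yields $\mathbb{E}\int_0^T(\int_0^1\zeta_x(\Theta)d\lambda-\zeta_x(s))^8ds\to0$ along that subsequence, and likewise for the $\zeta_{v_1}$ terms. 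As every subsequence admits a further subsequence along which $\rho\to0$, we get $\rho(\epsilon)\to0$, proving the first limit. The only potential obstacle here is purely bookkeeping — keeping track of which power of $\bar x/\epsilon$ and which power inside the bracket make the H\"older pairing balance — but the boundedness of all derivatives makes the bracket terms vanish regardless.

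The remaining four limits follow the same template. For $Z_1^\epsilon$: since $|h_j|\le C$, the exponential martingale $Z$ has $\sup_t\mathbb{E}|Z(t)|^p\le C_p$ for every $p$, and $Z_1^1$ (a linear SDE with bounded coefficients and $\mathbb{L}^8$ forcing $x_1^1,v_1$) satisfies $\sup_t\mathbb{E}|Z_1^1(t)|^4<\infty$, while $|\bar h_j(t)|\le C(|\bar x(t)|+\epsilon|v_1(t)|)$ gives $\mathbb{E}\int_0^T|\bar h_j(s)|^4ds\to0$; applying It\^o to $|Z_1^\epsilon(t)|^2$ (the equation for $Z_1^\epsilon$ has only $dY_j$ terms, $Y_j$ being $\mathbb{P}$-Brownian in the formulation (\ref{NewState})) and taking expectations gives $\mathbb{E}|Z_1^\epsilon(t)|^2\le C\int_0^t\mathbb{E}|Z_1^\epsilon(s)|^2ds+C\tilde\rho(\epsilon)$ with $\tilde\rho(\epsilon)\to0$ — the term $\mathbb{E}\int_0^T|Z(s)x_1^\epsilon(s)|^2ds\le(\mathbb{E}\int_0^T|Z|^4)^{1/2}(\mathbb{E}\int_0^T|x_1^\epsilon|^4)^{1/2}\to0$ by the first part, the $h_{jx},h_{jv_1}$ remainders by the bounded-convergence argument above, and $\mathbb{E}\int_0^T|Z_1^1(s)\bar h_j(s)|^2ds\to0$ by Cauchy--Schwarz — so Gronwall finishes the second limit. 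Finally $(y_1^\epsilon,z_1^\epsilon,z_{11}^\epsilon,z_{21}^\epsilon)$ solves a linear BSDE with generator $f_yy_1^\epsilon+f_zz_1^\epsilon+\sum_jf_{z_j}z_{j1}^\epsilon$ plus a remainder of the same three types (now with $f_x,f_{v_1}$ and the already-controlled $x_1^\epsilon$) and terminal value $g_x(x(T))x_1^\epsilon(T)+\frac{\bar x(T)}{\epsilon}\big(\int_0^1 g_x(x(T)+\lambda\bar x(T))d\lambda-g_x(x(T))\big)$; the standard a priori estimate for linear BSDEs gives $\sup_t\mathbb{E}|y_1^\epsilon(t)|^2+\mathbb{E}\int_0^T(|z_1^\epsilon|^2+|z_{11}^\epsilon|^2+|z_{21}^\epsilon|^2)dt\le C\mathbb{E}|\text{terminal}|^2+C\mathbb{E}\int_0^T|\text{remainder}|^2dt$, and both right-hand sides tend to $0$ — the terminal by $\mathbb{E}|x_1^\epsilon(T)|^4\to0$ together with bounded convergence for $g_x$, the remainder by the first part together with bounded convergence for $f_x,f_{v_1}$. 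This yields the last three limits, and the case $i=2$ is identical.
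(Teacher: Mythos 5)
Your proposal is correct and follows essentially the same route as the paper: the same decomposition of $x_1^\epsilon$ and $Z_1^\epsilon$ into a linear (Gronwall-amenable) part plus a remainder built from terms like $\frac{\bar x}{\epsilon}\big(\int_0^1\zeta_x(\Theta)d\lambda-\zeta_x(t)\big)$, the same H\"older pairing of the uniformly $L^8$-bounded factors $\bar x/\epsilon$ and $v_1$ against the vanishing bracket terms, and the same Gronwall conclusion. You additionally spell out the dominated-convergence justification for $\rho(\epsilon)\to0$ and the BSDE a priori estimate for the last three limits, both of which the paper asserts without detail ("From Hypothesis (H1)\ldots" and "well-known results"), so your write-up is, if anything, more complete.
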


\subsection{Variational inequality}
From the definition of open-loop Nash equilibrium point $(u_1(\cdot),u_2(\cdot))$ in Problem (NEP), it is clear that

\begin{equation}\label{nec}
\begin{aligned}
\epsilon^{-1}[J_1(u_1^\epsilon(\cdot),u_2(\cdot))-J_1(u_1(\cdot),u_2(\cdot))]\geq 0,\\
\epsilon^{-1}[J_2(u_1(\cdot),u_2^\epsilon(\cdot))-J_2(u_1(\cdot),u_2(\cdot))]\geq 0.
\end{aligned}
\end{equation}

Let $\Gamma_i(\cdot)=Z_i^1(\cdot)Z_i^{-1}(\cdot),i=1,2.$, From It\^{o}'s formula, we have

\begin{equation}\label{variational Gamma}
\left\{
\begin{aligned}
d\Gamma_i(t)=&\sum_{j=1}^{2}\big[h_{jx}(t)x_i^1(t)+h_{jv_i}(t)v_i(t)\big]dW^{u_1,u_2}_j(t),\\
\Gamma_i(0)=&0\quad(i=1,2).
\end{aligned}
\right.
\end{equation}

From (\ref{nec}), we can derive the variational inequality.
\begin{lem}
\begin{equation}
\begin{aligned}
\mathbb{E}&^{u_1,u_2}\bigg[\Phi_{ix}(x(T))x_i^1(T)+\gamma_{iy}(y(0))y_i^1(0)+\Phi_i(x(T))\Gamma_i(T)+\int_0^T\Gamma_i(t)l_i(t)dt\\
+&\int_0^T[l_{ix}(t)x_i^1(t)+l_{iy}(t)y_i^1(t)+l_{iz}(t)z_i^1(t)+\sum_{j=1}^{2}l_{iz_j}(t)z_{ji}^1(t)]dt+\int_0^Tl_{iv_i}(t)v_i(t)dt\bigg]\geq 0.
\end{aligned}
\end{equation}
\end{lem}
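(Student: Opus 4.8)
\emph{Proof proposal.} The plan is to pass to the limit $\epsilon\to0$ in the first-order conditions (\ref{nec}), identifying the limit of each difference quotient with the G\^ateaux derivative of $J_i$ along $(v_1,v_2)$. Working with $i=1$ (the case $i=2$ is symmetric, with the roles of the two players exchanged), I would first rewrite the functional under the reference measure $\mathbb{P}$ using (\ref{cost functional}):
\[
\frac{J_1(u_1^{\epsilon},u_2)-J_1(u_1,u_2)}{\epsilon}=\mathbb{E}\bigg[\int_0^T\frac{Z^{u_1^{\epsilon}}(t)l_1^{u_1^{\epsilon}}(t)-Z(t)l_1(t)}{\epsilon}\,dt+\frac{Z^{u_1^{\epsilon}}(T)\Phi_1(x^{u_1^{\epsilon}}(T))-Z(T)\Phi_1(x(T))}{\epsilon}+\frac{\gamma_1(y^{u_1^{\epsilon}}(0))-\gamma_1(y(0))}{\epsilon}\bigg].
\]
In each numerator I would insert and subtract cross terms, e.g. $Z^{u_1^{\epsilon}}l_1^{u_1^{\epsilon}}-Zl_1=(Z^{u_1^{\epsilon}}-Z)l_1+Z(l_1^{u_1^{\epsilon}}-l_1)+(Z^{u_1^{\epsilon}}-Z)(l_1^{u_1^{\epsilon}}-l_1)$, and expand the increments of $l_1,\Phi_1,\gamma_1$ by the mean value theorem, writing $l_1^{u_1^{\epsilon}}(t)-l_1(t)=\int_0^1[l_{1x}(\Theta)\bar x(t)+l_{1y}(\Theta)\bar y(t)+l_{1z}(\Theta)\bar z(t)+\sum_j l_{1z_j}(\Theta)\bar z_j(t)+\epsilon\, l_{1v_1}(\Theta)v_1(t)]\,d\lambda$ along the segment $\Theta$ joining the two state configurations, exactly as in the proof of Lemma~\ref{convergence}.

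Next I would take $\epsilon\to0$. Writing $\bar\phi/\epsilon=\phi_1^{\epsilon}+\phi_1^1$ for $\phi=x,y,z,z_1,z_2,Z$ and invoking Lemma~\ref{convergence}, each $\phi_1^{\epsilon}\to0$ in the relevant $L^p$-norm, so $\bar\phi/\epsilon\to\phi_1^1$; the boundedness of the derivatives in (H1)--(H2), together with their continuity and dominated convergence, then lets me replace $\int_0^1 l_{1x}(\Theta)\,d\lambda$ by $l_{1x}(t)$ and similarly for the remaining derivatives, $\Phi_{1x}$ and $\gamma_{1y}$. The remainder (cross) terms $\epsilon^{-1}(Z^{u_1^{\epsilon}}-Z)(l_1^{u_1^{\epsilon}}-l_1)$ and their terminal and initial analogues are dominated, via Cauchy--Schwarz and H\"older, by products of the $O(\epsilon)$ estimates of Lemma~2.2 for $\|\bar x\|,\|\bar y\|,\|\bar z\|,\|\bar Z\|$ and the uniform moment bounds of Lemma~2.1, hence vanish. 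This yields
\[
0\le\mathbb{E}\bigg[Z_1^1(T)\Phi_1(x(T))+Z(T)\Phi_{1x}(x(T))x_1^1(T)+\gamma_{1y}(y(0))y_1^1(0)+\int_0^T\Big(Z_1^1(t)l_1(t)+Z(t)\big[l_{1x}(t)x_1^1(t)+l_{1y}(t)y_1^1(t)+l_{1z}(t)z_1^1(t)+\textstyle\sum_j l_{1z_j}(t)z_{j1}^1(t)+l_{1v_1}(t)v_1(t)\big]\Big)dt\bigg].
\]

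Finally, since $\Gamma_1=Z_1^1Z^{-1}$ and $\mathrm{d}\mathbb{P}^{u_1,u_2}/\mathrm{d}\mathbb{P}|_{\mathcal F_T}=Z(T)$, I would use the identity $\mathbb{E}[Z(t)\xi]=\mathbb{E}^{u_1,u_2}[\xi]$ valid for $\mathcal F_t$-measurable $\xi$ (by the martingale property of $Z$), so in particular $\mathbb{E}[Z_1^1(t)\xi]=\mathbb{E}[Z(t)\Gamma_1(t)\xi]=\mathbb{E}^{u_1,u_2}[\Gamma_1(t)\xi]$, together with the fact that $y(0),y_1^1(0)$ are $\mathcal F_0$-measurable hence deterministic, to rewrite the displayed inequality under $\mathbb{P}^{u_1,u_2}$; this is precisely the stated variational inequality for $i=1$.

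The step I expect to be the main obstacle is the justification of interchanging limit and expectation, especially for the terminal term $\epsilon^{-1}\big(Z^{u_1^{\epsilon}}(T)\Phi_1(x^{u_1^{\epsilon}}(T))-Z(T)\Phi_1(x(T))\big)$: the density $Z^{u_1^{\epsilon}}(T)$ is only bounded in $L^1$ by Lemma~2.1, whereas $\Phi_1$ and $\Phi_{1x}$ carry polynomial growth in $x^{u_1^{\epsilon}}(T)$ and $x_1^1(T)$, and the running-cost bounds in (H2) are merely of linear growth in $(y,z,z_1,z_2,v_1,v_2)$. Resolving this requires the $8$th-moment integrability built into $\mathcal U_i$ and the $\epsilon^8$-rate in Lemma~2.2, splitting each cross term with H\"older conjugate exponents chosen large enough that the a priori moment estimates of Lemma~2.1 close the bound; once the resulting uniform integrability is in hand, the pointwise convergences above upgrade to convergence of the expectations.
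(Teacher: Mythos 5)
Your proposal follows essentially the same route as the paper: decompose the difference quotient of $J_1$ under $\mathbb{P}$ into running, terminal and initial pieces, expand each product difference (e.g. $Z^{u_1^\epsilon}\Phi_1^{u_1^\epsilon}-Z\Phi_1$) via the mean value theorem, pass to the limit using Lemma~\ref{convergence}, and then use $\Gamma_1=Z_1^1Z^{-1}$ together with the Girsanov density to restate the result under $\mathbb{E}^{u_1,u_2}$. The only differences are cosmetic (your three-term splitting with an explicit cross term versus the paper's two-term splitting) plus your more careful discussion of uniform integrability, which the paper leaves implicit.
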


\begin{proof}
we only consider the case $i=1$.
From (\ref{cost functional}), we have
\begin{equation}
\begin{aligned}
\epsilon^{-1}[J_1(u_1^\epsilon(\cdot),u_2(\cdot))-&J_1(u_1(\cdot),u_2(\cdot))]=\epsilon^{-1}\mathbb{E}\bigg[\int_0^T\Big(Z^{u_1^\epsilon}(t)l_1^{u_1^\epsilon}(t)-Z(t)l_1(t)\Big)dt\\
+&\Big(Z^{u_1^\epsilon}(T)\Phi_1^{u_1^\epsilon}(x(T))-Z(T)\Phi_1(x(T))\Big)+\Big(\gamma_1^{u_1^\epsilon}(y(0))-\gamma_1(y(0))\Big)\bigg]\geq 0.
\end{aligned}
\end{equation}

According to Lemma \ref{convergence} and Hypothesis (H2),
\begin{equation}\label{1}
\begin{aligned}
\epsilon^{-1}[\gamma_1^{u_1^\epsilon}(y(0))-\gamma_1(y(0))]=\int_0^1\gamma_{1y}\Big(y(0)+\lambda\big(y^{u_1^\epsilon}(0)-y(0)\big)\Big)d\lambda\frac{(y^{u_1^\epsilon}(0)-y(0))}{\epsilon}\rightarrow\gamma_{iy}(y(0))y_1^1(0),
\end{aligned}
\end{equation}
and
\begin{equation}\label{2}
\begin{aligned}
\epsilon^{-1}\mathbb{E}&[Z^{u_1^\epsilon}(T)\Phi_1^{u_1^\epsilon}(x(T))-Z(T)\Phi_1(x(T))]\\
=&\epsilon^{-1}\mathbb{E}[Z^{u_1^\epsilon}(t)(\Phi_1^{u_1^\epsilon}(x(T))-\Phi_1(x(T)))+\Phi_1(x(T))(Z^{u_1^\epsilon}(T)-Z(T))]\\
=&\mathbb{E}[Z^{u_1^\epsilon}(T)\Bigg(\int_0^1\Phi_{1x}\Big(x(T)+\lambda\big(x^{u_1^\epsilon}(T)-x(T)\big)\Big)d\lambda\frac{(x^{u_1^\epsilon}(T)-x(T))}{\epsilon}\Bigg)+\Phi_1(x(T))\frac{(Z^{u_1^\epsilon}(T)-Z(T))}{\epsilon}]\\
\rightarrow&\mathbb{E}[Z(T)\Phi_{1x}(x(T))x_1^1(T)+\Phi_1(x(T))Z_1^1(T)].
\end{aligned}
\end{equation}
Similarly, we have
\begin{equation}\label{3}
\begin{aligned}
\epsilon^{-1}\mathbb{E}[\int_0^T(Z^{u_1^\epsilon}(t)l_1^{u_1^\epsilon}(t)-Z(t)l_1(t))dt]\rightarrow&\mathbb{E}\bigg[\int_0^TZ(t)\big(l_{1x}(t)x_1^1(t)+l_{1y}(t)y_1^1(t)+l_{1z}(t)z_1^1(t)+\sum_{j=1}^{2}l_{1z_j}(t)z_{j1}^1(t)\\
+&l_{1v_1}(t)v_1(t)\big)dt+\int_0^Tl_1(t)Z_1^1(t)dt\bigg].
\end{aligned}
\end{equation}
From the definition of $\Gamma(\cdot)$ and (\ref{1})-(\ref{3}), we derive the variational inequality.
\end{proof}

\subsection{A necessary condition (maximum principle)}
In the following, we ignore the superscript of $W_j^{u_1,u_2}(\cdot),j=1,2$ for notation simplicity.
We formulate adjoint equaions under probability measure $\mathbb{P}^{u_1,u_2}$.

\begin{equation}\label{P}
\left\{
\begin{aligned}
-dP_{i}(t)=&l_i(t)dt-Q_{i}(t)dW(t)-\sum_{j=1}^2Q_{ji}dW_j(t),\\
P_i(T)=&\Phi_i(x(T))\quad(i=1,2).
\end{aligned}
\right.
\end{equation}

\begin{equation}\label{pq}
\left\{
\begin{aligned}
dp_i(t)=&[f_y(t)p_i(t)-l_{iy}(t)]dt+[f_z(t)p_i(t)-l_{iz}(t)]dW(t)+\sum_{j=1}^2\big[\big(f_{z_j}(t)-h_j(t)\big)p_i(t)-l_{iz_j}(t)\big]dW_j(t),\\
-dq_i(t)=&\Big\{\big[b_x(t)-\sum_{j=1}^2\sigma_{j}(t)h_{jx}(t)\big]q_i(t)+\sigma_x(t)k_i(t)+\sum_{j=1}^2\big[\sigma_{jx}(t)k_{ji}(t)+h_{jx}(t)Q_{ji}(t)\big]\\
-&f_x(t)p_i(t)+l_{ix}(t)\Big\}dt-k_i(t)dW(t)-\sum_{j=1}^2k_{ji}(t)dW_j(t),\\
p_i(0)=&-\gamma_y(y(0)),\\
q_i(T)=&-g_x(x(T))p_i(T)+\Phi_{ix}(x(T))\quad(i=1,2).
\end{aligned}
\right.
\end{equation}

It is noteworthy here that  $W_j(\cdot),j=1,2$ is the Brownian motion under probability measure $\mathbb{P}^{u_1,u_2}$. Due to the observation equation (\ref{Observe}) and the appearance of $W_j^{v_1,v_2}(\cdot)$ and $Y_j(\cdot),j=1,2$ in the forward-backward state equation  (\ref{FBSDE}), the equation (\ref{pq}) is not the classic form any more. Also we should introduce equation (\ref{P}) to deal with the controlled probability measure $\mathbb{P}^{v_1,v_2}$ or expectation $\mathbb{E}^{v_1,v_2}$ related to observation process when using the variational method.

Now we give the necessary condition.
\begin{theorem}\label{MP}
Suppose (H1) and (H2) hold, $(u_1(\cdot),u_2(\cdot))$ is an open-loop Nash equilibrium point of problem (NEP), and $(x,y,z,z_1,z_2)$ is the corresponding state process, then we have
\begin{equation}
\begin{aligned}
\mathbb{E}^{u_1,u_2}[\tilde{H}_{1{v_1}}(t,x,y,z,z_1,z_2,u_1,u_2;q_1,k_1,k_{11},k_{21},p_1,Q_{11},Q_{21})(v_1-u_1(t))|\mathcal{F}_t^1]\geq 0,\\
\mathbb{E}^{u_1,u_2}[\tilde{H}_{2{v_2}}(t,x,y,z,z_1,z_2,u_1,u_2;q_2,k_2,k_{12},k_{22},p_2,Q_{12},Q_{22})(v_2-u_2(t))|\mathcal{F}_t^2]\geq 0,
\end{aligned}
\end{equation}
for $\forall (v_1,v_2)\in U_1\times U_2,\ a.e.$. Here we set

\begin{equation}\label{H}
\begin{aligned}
\tilde{H}_{iv_i}(t)=& \tilde{H}_{i{v_i}}(t,x,y,z,z_1,z_2,u_1,u_2;q_i,k_i,k_{1i},k_{1i},p_i,Q_{1i},Q_{2i})\\
=& H_{iv_i}(t)-\sum_{j=1}^2q_i(t)\sigma_j(t,x(t),u_1(t),u_2(t))h_{jv_i}(t,x(t),u_1(t),u_2(t)),
\end{aligned}
\end{equation}
where
\begin{equation}
\begin{aligned}
H_{i}(\cdot)\triangleq& b(t,x,u_1,u_2)q_i(t)+\sigma(t,x,u_1,u_2)k_i(t)+\sum_{j=1}^2[\sigma_j(t,x,u_1,u_2)k_{ji}(t)+h_j(t,x,u_1,u_2)Q_{ji}(t)]\\
-&[f(t,x,y,z,z_1,z_2,u_1,u_2)-\sum_{j=1}^2h_j(t,x,u_1,u_2)z_j(t)]p_i(t)+l_i(t,x,y,z,z_1,z_2,u_1,u_2)\ (i=1,2).\\
\end{aligned}
\end{equation}
\end{theorem}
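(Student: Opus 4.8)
The plan is to convert the variational inequality of the previous lemma into the stated conditional form by introducing the adjoint processes and applying It\^o's formula to suitable products. First, I would take the variational inequality (for $i=1$, the case $i=2$ being symmetric) and process each of its six terms separately. Three of them, namely $\Phi_{1x}(x(T))x_1^1(T)$, $\gamma_{1y}(y(0))y_1^1(0)$ and $\int_0^T l_{1x}(t)x_1^1(t)+l_{1y}(t)y_1^1(t)+l_{1z}(t)z_1^1(t)+\sum_j l_{1z_j}(t)z_{j1}^1(t)\,dt$, are linear in the variational state processes $(x_1^1,y_1^1,z_1^1,z_{j1}^1)$ governed by \eqref{Variational xy}; the remaining terms $\Phi_1(x(T))\Gamma_1(T)$ and $\int_0^T \Gamma_1(t) l_1(t)\,dt$ involve the process $\Gamma_1$ from \eqref{variational Gamma}. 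Accordingly I would compute, under $\mathbb{P}^{u_1,u_2}$, the It\^o differentials of the products $p_1(t)y_1^1(t)$, $q_1(t)x_1^1(t)$, and $P_1(t)\Gamma_1(t)$, using the dynamics \eqref{Variational xy}, \eqref{variational Gamma}, \eqref{P}, \eqref{pq}.

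Next I would integrate these It\^o formulas from $0$ to $T$ and take $\mathbb{E}^{u_1,u_2}$, so that the martingale (stochastic integral) parts vanish. The terminal and initial conditions are designed precisely so that the boundary terms reproduce exactly $\mathbb{E}^{u_1,u_2}[\Phi_{1x}(x(T))x_1^1(T)+\gamma_{1y}(y(0))y_1^1(0)+\Phi_1(x(T))\Gamma_1(T)]$: the choice $q_1(T)=-g_x(x(T))p_1(T)+\Phi_{1x}(x(T))$ together with $y_1^1(T)=g_x(x(T))x_1^1(T)$ and $p_1(0)=-\gamma_y(y(0))$, $x_1^1(0)=0$, $\Gamma_1(0)=0$, $P_1(T)=\Phi_1(x(T))$ make the boundary data collapse correctly. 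The drift cross-terms produced by the It\^o expansion are arranged, via the specific drift coefficients appearing in \eqref{pq} and the quadratic covariation contributions of the $dW_j$ integrals in \eqref{Variational z}/\eqref{variational Gamma} against those in \eqref{pq}, to cancel all occurrences of $x_1^1, y_1^1, z_1^1, z_{j1}^1, \Gamma_1$ except the $v_1$-linear source terms. What survives after all cancellations is $\mathbb{E}^{u_1,u_2}\int_0^T \tilde H_{1v_1}(t) v_1(t)\,dt \ge 0$, where $\tilde H_{1v_1}$ is read off from the coefficient of $v_1(t)$ and matches \eqref{H}: the $H_{1v_1}$ part comes from the $b_{v_1},\sigma_{v_1},\sigma_{jv_1},h_{jv_1},f_{v_1},l_{1v_1}$ terms paired with $q_1,k_1,k_{j1},Q_{j1},p_1$, and the extra $-\sum_j q_1 \sigma_j h_{jv_1}$ term comes from the $-\sum_j \sigma_j h_{jv_1} v_1$ correction in the drift of $x$ in \eqref{NewState}.

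Finally, since $v_1(\cdot)$ is an arbitrary $\mathbb{L}^8_{\mathcal{F}^1}(0,T;\mathbb{R})$ perturbation with $u_1+v_1\in\mathcal U_1$, and $U_1$ is convex, for any fixed $v_1\in U_1$ I can choose the perturbation of the form $v_1(t)\mathbf{1}_A(t,\omega)$ with $A$ an arbitrary set in the product of the Lebesgue and $\mathcal{F}_t^1$ $\sigma$-algebras (more precisely, replace $v_1(\cdot)$ in the above by $(v_1-u_1(t))\mathbf{1}_A$); a standard localization/arbitrariness argument then yields $\mathbb{E}^{u_1,u_2}[\tilde H_{1v_1}(t)(v_1-u_1(t))\mid\mathcal F_t^1]\ge 0$ for a.e.\ $t,\omega$. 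The main obstacle I anticipate is bookkeeping the It\^o cross-variation terms under $\mathbb{P}^{u_1,u_2}$: because the state equation in the form \eqref{NewState} is driven by $W,Y_1,Y_2$ while the adjoint equations \eqref{P},\eqref{pq} are driven by $W,W_1,W_2$ (with $dY_j = h_j\,dt + dW_j$ under $\mathbb{P}^{u_1,u_2}$), one must carefully track how the $h_j\,dt$ drift shifts interact with the $-h_j(t)p_i(t)$ and $h_{jx}(t)Q_{ji}(t)$ terms in \eqref{pq} to get exact cancellation; verifying that every non-source term cancels is the delicate computational core, while the integrability needed to drop the martingale terms follows from Hypothesis (H1)--(H2) and Lemmas 2.1--2.3.
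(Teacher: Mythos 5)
Your proposal is correct and follows essentially the same route as the paper: the paper likewise applies It\^o's formula to $q_1(\cdot)x_1^1(\cdot)$, $p_1(\cdot)y_1^1(\cdot)$ and $P_1(\cdot)\Gamma_1(\cdot)$ under $\mathbb{P}^{u_1,u_2}$, substitutes into the variational inequality to obtain $\mathbb{E}^{u_1,u_2}\int_0^T \tilde H_{1v_1}(t)\,v_1(t)\,dt\geq 0$, and then localizes with perturbations of the form $(v_1-u_1(t))\mathbf{1}_A$, $A\in\mathcal F_t^1$, to reach the conditional-expectation statement. The cancellation structure and the origin of the extra $-\sum_j q_i\sigma_j h_{jv_i}$ term that you describe are exactly what the paper's computation delivers.
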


\begin{proof}
We only consider the $i=1$ case. Applying It\^o's formula to $q_1(\cdot)x_1^1(\cdot)$, $p_1(\cdot)y_1^1(\cdot)$, $P_1(\cdot)\Gamma_1(\cdot)$ respectively, we have

\begin{equation}\label{qx}
\begin{aligned}
\mathbb{E}&^{u_1,u_2}[\big(\Phi_{1x}(x(T))-g_x(x(T))p_1(T)\big)x_1^1(T)]=\mathbb{E}^{u_1,u_2}\int_0^T\bigg[q(t)\Big([b_x(t)-\sum_{j=1}^2\sigma_j(t)h_{jx}(t)]x_1^1(t)\\
+&[b_{v_1}(t)-\sum_{j=1}^2\sigma_j(t)h_{jv_1}(t)]v_1(t)\Big)-x_1^1(t)\Big([b_x(t)-\sum_{j=1}^2\sigma_j(t)h_{jx}(t)]q_1(t)+\sigma_x(t)k_1(t)\\
+&\sum_{j=1}^2(\sigma_{jx}(t)k_{j1}(t)+h_{jx}(t)Q_{j1}(t)))-f_x(t)p_1(t)+l_{1x}(t)\Big)\bigg]dt+\mathbb{E}^{u_1,u_2}\bigg[\int_0^Tk_1(t)(\sigma_x(t)x_1^1(t)+\sigma_{v_1}(t)v_1(t))dt\bigg]\\
+&\sum_{j=1}^2\mathbb{E}^{u_1,u_2}\bigg[\int_0^Tk_{j1}(t)(\sigma_{jx}(t)x_1^1(t)+\sigma_{jv_1}(t)v_1(t))dt\bigg],
\end{aligned}
\end{equation}

\begin{equation}\label{py}
\begin{aligned}
\mathbb{E}&^{u_1,u_2}[(p_{1}(T)g_x(x(T))x_1^1(T)+\gamma_y(y(0))y_1^1(0)]=\mathbb{E}^{u_1,u_2}\int_0^T\bigg[-p_1(t)\bigg(f_x(t)x_1^1(t)+f_y(t)y_1^1(t)\\
+&f_z(t)z_1^1(t)+f_{v_1}(t)v_1(t)+\sum_{j=1}^2\big(f_{z_j}(t)-h_j(t)\big)z_{j1}^1(t)\bigg)+y_1^1(t)\big(f_y(t)p_1(t)-l_{iy}(t)\big)\bigg]dt\\
+&\mathbb{E}^{u_1,u_2}\bigg[\int_0^Tz_1^1(t)\big(f_z(t)p_1(t)-l_{1z}(t)\big)dt\bigg]+\sum_{j=1}^2\mathbb{E}^{u_1,u_2}\bigg[\int_0^Tz_{j1}^1(t)\Big((f_{z_j}(t)-h_{j}(t))p_1(t)-l_{1z_j}(t)\Big)dt\bigg],
\end{aligned}
\end{equation}
and
\begin{equation}\label{GammaP}
\begin{aligned}
\mathbb{E}&^{u_1,u_2}[\Phi_1(x(T))\Gamma_1(T)]=\mathbb{E}^{u_1,u_2}\big[\int_0^T-\Gamma_1(t)l_1(t)dt]+\sum_{j=1}^2\mathbb{E}^{u_1,u_2}\big[\int_0^TQ_{j1}(t)\big(h_{jx}(t)x_1^1(t)+h_{jv_1}(t)v_1(t)\big)\big]dt.
\end{aligned}
\end{equation}
Substituting (\ref{qx})-(\ref{GammaP}) into variational inequality, we get

\begin{equation}
\begin{aligned}
\mathbb{E}^{u_1,u_2}&\bigg[\int_0^T\Big(\big(b_{v_1}(t)-\sum_{j=1}^2\sigma_j(t)h_{jv_1}(t)\big)q_1(t)-f_{v_1}(t)p_1(t)+\sigma_{v_1}(t)k_1(t)+\sum_{j=1}^2\big(\sigma_{jv_1}(t)k_{j1}(t)+h_{jv_1}(t)Q_{j1}(t)\big)\\
+&l_{1v_1}(t)\Big)\cdot v_1(t)\bigg]dt\geq 0,
\end{aligned}
\end{equation}
for any $v_1(\cdot)$ such that $u_1(\cdot)+v_1(\cdot)\in\mathcal{U}_1$.

Let $w_1(\cdot)=u_1(\cdot)+v_1(\cdot)$, then above equation implies that

\begin{equation}
\mathbb{E}^{u_1,u_2}[\tilde{H}_{1v_1}(t)\cdot(w_1(t)-u_1(t))]\geq0, \qquad a.e.
\end{equation}

We set
\begin{equation}
\bar{w}_1(t)=v_1 1_A+u_1(t)1_{\Omega-A},\quad \forall v_1\in U_1,\quad\forall A\in\mathcal{F}_t^1.
\end{equation}

Then
\begin{equation}
v_1(t)=(v_1-u_1(t))1_A,\quad\forall v_1\in U_1,\quad\forall A\in\mathcal{F}_t^1.
\end{equation}

So
\begin{equation}
\mathbb{E}^{u_1,u_2}[1_A\tilde{H}_{1v_1}(t)\cdot(v_1-u_1(t))]\geq0,\quad\forall v_1\in U_1,\quad\forall A\in\mathcal{F}_t^1.
\end{equation}

Thus we get
\begin{equation}
\mathbb{E}^{u_1,u_2}[\tilde{H}_{1v_1}(t)\cdot(v_1-u_1(t))|\mathcal{F}_t^1]\geq0,\quad\forall v_1\in U_1.
\end{equation}

The prove of case $i=2$ is similar.
\end{proof}

\begin{remark}
The appearance of second part in the right-side of equation (\ref{H}) is caused by the appearance of control variables $v_1(\cdot),v_2(\cdot)$ of $h(\cdot) $ in observation equation (\ref{Observe}).
\end{remark}

\begin{cor}\label{MPsaddle}
Suppose (H1) and (H2) hold, $(u_1(\cdot),u_2(\cdot))$ is a saddle point of problem (EP), then we have

\begin{equation}
\begin{aligned}
\mathbb{E}^{u_1,u_2}[\tilde{H}_{1{v_1}}(t,x,y,z,z_1,z_2,u_1,u_2;q_1,k_1,k_{11},k_{21},p_1,Q_{11},Q_{21})(v_1-u_1(t))|\mathcal{F}_t^1]\geq 0,\\
\mathbb{E}^{u_1,u_2}[\tilde{H}_{1{v_2}}(t,x,y,z,z_1,z_2,u_1,u_2;q_1,k_1,k_{11},k_{21},p_1,Q_{11},Q_{21})(v_2-u_2(t))|\mathcal{F}_t^2]\leq 0,
\end{aligned}
\end{equation}
for $\forall (v_1,v_2)\in U_1\times U_2,\ a.e.$.
\end{cor}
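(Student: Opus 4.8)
\textbf{Proof proposal for Corollary \ref{MPsaddle}.}
The plan is to obtain this statement as an immediate specialization of Theorem \ref{MP}, using the observation that Problem (EP) is the particular case of Problem (NEP) in which $J_1 = J = -J_2$. First I would record that a saddle point $(u_1(\cdot),u_2(\cdot))$ of Problem (EP), characterized by $J(u_1(\cdot),v_2(\cdot))\leq J(u_1(\cdot),u_2(\cdot))\leq J(v_1(\cdot),u_2(\cdot))$ for all $(v_1(\cdot),v_2(\cdot))\in\mathcal{U}_1\times\mathcal{U}_2$, is in particular an open-loop Nash equilibrium point for the non-zero sum game with cost functionals $(J_1,J_2)=(J,-J)$: indeed $J_1(u_1(\cdot),u_2(\cdot))=J(u_1(\cdot),u_2(\cdot))\leq J(v_1(\cdot),u_2(\cdot))=J_1(v_1(\cdot),u_2(\cdot))$ gives the first minimization in (\ref{nzJ}), and $J_2(u_1(\cdot),u_2(\cdot))=-J(u_1(\cdot),u_2(\cdot))\leq -J(u_1(\cdot),v_2(\cdot))=J_2(u_1(\cdot),v_2(\cdot))$ gives the second. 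Hence Theorem \ref{MP} applies directly.

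Next I would apply Theorem \ref{MP} to this specialization. For player $1$ the running/terminal/initial cost data are $l_1,\Phi_1,\gamma_1$, so the adjoint equations (\ref{P})--(\ref{pq}) and the Hamiltonian $H_1$, $\tilde H_{1v_1}$ are exactly as in the statement, and the first inequality of the corollary is precisely the first inequality of Theorem \ref{MP}. For player $2$ the cost data are $l_2=-l_1$, $\Phi_2=-\Phi_1$, $\gamma_2=-\gamma_1$. I would then check that the adjoint processes for player $2$ are the negatives of those for player $1$: because equations (\ref{P}) and (\ref{pq}) are linear in $(P_i,Q_i,Q_{ji})$ and in $(p_i,q_i,k_i,k_{ji})$ with inhomogeneous terms $l_i,l_{ix},l_{iy},l_{iz},l_{iz_j},\Phi_i,\gamma_i$ that all flip sign, uniqueness of solutions forces $P_2=-P_1$, $Q_2=-Q_1$, $Q_{j2}=-Q_{j1}$, $p_2=-p_1$, $q_2=-q_1$, $k_2=-k_1$, $k_{j2}=-k_{j1}$. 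Consequently $H_2 = -H_1$ and $\tilde H_{2v_2} = -\tilde H_{1v_2}$ when both are evaluated along the common equilibrium state process at the adjoint data of player $1$. Substituting this sign relation into the second inequality of Theorem \ref{MP}, namely $\mathbb{E}^{u_1,u_2}[\tilde H_{2v_2}(t)(v_2-u_2(t))\,|\,\mathcal{F}_t^2]\geq 0$, yields $\mathbb{E}^{u_1,u_2}[\tilde H_{1v_2}(t)(v_2-u_2(t))\,|\,\mathcal{F}_t^2]\leq 0$, which is exactly the second inequality claimed.

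The only mildly delicate point—and the step I would be most careful about—is the bookkeeping of which Hamiltonian and which adjoint variables appear in the second inequality: the corollary is stated with $\tilde H_{1v_2}$ and with player $1$'s adjoint tuple $(q_1,k_1,k_{11},k_{21},p_1,Q_{11},Q_{21})$, not with player $2$'s, so the argument must explicitly pass through the identification $\tilde H_{2v_2}(\cdot;q_2,\dots)=-\tilde H_{1v_2}(\cdot;q_1,\dots)$ established above rather than merely quoting Theorem \ref{MP} verbatim. Everything else is routine: the state equation, the variational equations (\ref{Variational xy})--(\ref{variational Gamma}), and the convergence estimates of Lemma \ref{convergence} are unaffected since they do not involve $l_i,\Phi_i,\gamma_i$, so no re-derivation is needed. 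I would therefore present the corollary's proof in two short lines: reduce (EP) to (NEP) with $(J_1,J_2)=(J,-J)$, invoke Theorem \ref{MP}, and use the linearity/uniqueness sign-flip of the adjoint systems to rewrite player $2$'s inequality in terms of $\tilde H_{1v_2}$.
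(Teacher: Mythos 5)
Your proposal is correct and follows exactly the route the paper intends: the corollary is stated without proof as an immediate specialization of Theorem \ref{MP} to the zero-sum case $(J_1,J_2)=(J,-J)$, and your sign-flip argument for the adjoint tuples (via linearity and uniqueness of (\ref{P}) and (\ref{pq})) correctly supplies the bookkeeping that turns player 2's inequality $\mathbb{E}^{u_1,u_2}[\tilde H_{2v_2}(v_2-u_2)\,|\,\mathcal{F}_t^2]\geq 0$ into the stated $\mathbb{E}^{u_1,u_2}[\tilde H_{1v_2}(v_2-u_2)\,|\,\mathcal{F}_t^2]\leq 0$ in terms of player 1's adjoint variables. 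Your write-up is in fact more explicit than the paper's (which omits the argument entirely), and no gap remains.
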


\begin{remark}
Theorem \ref{MP} (corollary \ref{MPsaddle}) is equivalence to

\begin{equation}
\begin{aligned}
\mathbb{E}^{u_1,u_2}[\tilde{H}_{1{v_1}}(t,x,y,z,z_1,z_2,u_1,u_2;q_1,k_1,k_{11},k_{21},p_1,Q_{11},Q_{21})|\mathcal{F}_t^1]=0,\\
\mathbb{E}^{u_1,u_2}[\tilde{H}_{1{v_2}}(t,x,y,z,z_1,z_2,u_1,u_2;q_2,k_2,k_{12},k_{22},p_2,Q_{12},Q_{22})|\mathcal{F}_t^2]=0,\\
\end{aligned}
\end{equation}
for $\forall (v_1,v_2)\in U_1\times U_2,\ a.e.$.
\begin{equation}
\left(
\begin{aligned}
\mathbb{E}^{u_1,u_2}[\tilde{H}_{1{v_1}}(t,x,y,z,z_1,z_2,u_1,u_2;q_1,k_1,k_{11},k_{21},p_1,Q_{11},Q_{21})|\mathcal{F}_t^1]=0\\
\mathbb{E}^{u_1,u_2}[\tilde{H}_{1{v_2}}(t,x,y,z,z_1,z_2,u_1,u_2;q_1,k_1,k_{11},k_{21},p_1,Q_{11},Q_{21})|\mathcal{F}_t^2]=0
\end{aligned}
\right),
\end{equation}
for $\forall (v_1,v_2)\in U_1\times U_2,\ a.e.$.
\end{remark}

In the following remarks, we discuss some special cases in the system of our problem (NEP).
\begin{remark}
If the form of the forward equation $x(\cdot)$ in (\ref{FBSDE}) satisfies
\begin{equation}
\left\{
\begin{aligned}
dx(t)=&b(t,x(t),v_{1}(t),v_{2}(t))dt+\sigma(t,x(t),v_{1}(t),v_{2}(t))dW(t),\\
x(0)=&x,
\end{aligned}
\right.
\end{equation}
where it doesn't contain the $W_j^{u_1,u_2}(\cdot)$ part. Then, It is a special case that $\sigma_j(\cdot)\equiv k_{j\cdot}(\cdot)\equiv 0$. According to the theorem \ref{MP}, the necessary condition becomes:

\begin{equation}
\begin{aligned}
\mathbb{E}^{u_1,u_2}[\bar{H}_{i{v_i}}(t,x,y,z,z_1,z_2,u_1,u_2;q_i,k_i,k_{1i},k_{2i},p_i,Q_{1i},Q_{2i})|\mathcal{F}_t^i]=0\quad(i=1,2),
\end{aligned}
\end{equation}
where

\begin{equation}
\begin{aligned}
\bar{H}_{i}(\cdot)=& b(t,x,u_1,u_2)q_i(t)+\sigma(t,x,u_1,u_2)k_i(t)+\sum_{j=1}^2h_j(t,x,u_1,u_2)Q_{ji}(t)\\
-&f(t,x,y,z,z_1,z_2,u_1,u_2)p_i(t)+l_i(t,x,y,z,z_1,z_2,u_1,u_2).\\
\end{aligned}
\end{equation}

The adjoint process are changed to
\begin{equation}
\left\{
\begin{aligned}
dp_i(t)=&[f_y(t)p_i(t)-l_{iy}(t)]dt+[f_z(t)p_i(t)-l_{iz}(t)]dW(t)+\sum_{j=1}^2[(f_{z_j}(t)-h_j(t))p_i(t)-l_{iz_j}(t)]dW_j(t),\\
-dq_i(t)=&\big[b_x(t)q_i(t)+\sigma_x(t)k_i(t)+\sum_{j=1}^2h_{jx}(t)Q_{ji}(t)-f_x(t)p_i(t)+l_{ix}(t)\big]dt-k_i(t)dW(t),\\
p_i(0)=&-\gamma_y(y(0)),\\
q_i(T)=&-g_x(x(T))p_i(T)+\Phi_{ix}(x(T))\quad(i=1,2),
\end{aligned}
\right.
\end{equation}
and
\begin{equation}
\left\{
\begin{aligned}
-dP_{i}(t)=&l_i(t)dt-Q_{i}(t)dW(t)-\sum_{j=1}^2Q_{ji}(t)dW_j(t),\\
P_i(T)=&\Phi_i(x(T))\quad(i=1,2).
\end{aligned}
\right.
\end{equation}
\end{remark}

\begin{remark}
If we set $h(\cdot)$ in the observation equation satisfies $h(t,x(\cdot),v_1(\cdot),v_2(\cdot))=h(t,x(\cdot))$, then the Hamiltanian in theorem \ref{MP} becomes:

\begin{equation}
\tilde{H}_{iv_i}(t)=H_{iv_i}(t)\quad(i=1,2),
\end{equation}
where

\begin{equation}
\begin{aligned}
H_{i}(\cdot)\triangleq& b(t,x,u_1,u_2)q_i(t)+\sigma(t,x,u_1,u_2)k_i(t)+\sum_{j=1}^2[\sigma_j(t,x)k_{ji}(t)+h_j(t,x)Q_{ji}(t)]\\
-&[f(t,x,y,z,z_1,z_2,u_1,u_2)-\sum_{j=1}^2h_j(t,x)z_j(t)]p_i(t)+l_i(t,x,y,z,z_1,z_2,u_1,u_2),\\
\end{aligned}
\end{equation}
and the corresponding adjoint equations (\ref{P}), (\ref{pq}) are unchanged.
\end{remark}

\begin{remark}\label{uncontroled}
As mentioned in Remark \ref{circle}, the main obstacle in the construction of the partially observed system is that if observation variable $Y(\cdot)$ relies on control variable, the control will adapted to a controlled filtration, and the $dY(\cdot)$ part will be affected by convex variational method. Here, if we consider the special case that $h_j(\cdot,x(\cdot),v_1(\cdot),v_2(\cdot))=h_j(\cdot)$ in (\ref{Observe}), then $Y_j(\cdot)$ is uncontrolled process naturally. Thus we don't need to use Girsanov theorem to reconstruct the system. We set $W_j^{v_1,v_2}(\cdot)=W_j(\cdot)$ to be the B.M. under probability measure $\mathbb{P}^{v_1,v_2}=\mathbb{P}$ straightly. In that way, the adjoint process $P(\cdot)$ is needless for the reason that control doesn't affect the observation equation any more.

Thus the Hamiltonion in theorem \ref{MP} becomes the classic form below
\begin{equation}
\begin{aligned}
\mathbb{E}[{H}_{i{v_i}}(t,x,y,z,z_1,z_2,u_1,u_2;q_i,k_i,k_{1i},k_{2i},p_i)|\mathcal{F}_t^i]=0\quad(i=1,2),
\end{aligned}
\end{equation}

where
\begin{equation}
\begin{aligned}
{H}_{i}(t)=&H_{i}(t,x,y,z,z_1,z_2,u_1,u_2;q_i,k_i,k_{1i},k_{2i},p_i)\\
=&b(t,x,u_1,u_2)q_i(t)+\sigma(t,x,u_1,u_2)k_i(t)+\sum_{j=1}^2\sigma_j(t,x,u_1,u_2)k_{ji}(t)\\
-&[f(t,x,y,z,z_1,z_2,u_1,u_2)-\sum_{j=1}^2h_j(t)z_j(t)]p_i(t)+l_i(t,x,y,z,z_1,z_2,u_1,u_2),\\
\end{aligned}
\end{equation}

with adjoint process satisfies
\begin{equation}\label{adj unc}
\left\{
\begin{aligned}
dp_i(t)=&-H_{iy}(t)dt-H_{iz}(t)dW(t)-\sum_{j=1}^2H_{iz_j}(t)dW_j(t),\\
-dq_i(t)=&H_{ix}(t)dt-k_i(t)dW(t)-\sum_{j=1}^2k_{ji}(t)dW_j(t),\\
p_i(0)=&-\gamma_y(y(0)),\\
q_i(T)=&-g_x(x(T))p_i(T)+\Phi_{ix}(x(T))\quad(i=1,2),
\end{aligned}
\right.
\end{equation}

and
\begin{equation}
\left\{
\begin{aligned}
-dP_{i}(t)=&l_i(t)dt-Q_{i}(t)dW(t)-\sum_{j=1}^2Q_{ji}(t)dW_j(t),\\
P_i(T)=&\Phi_i(x(T))\quad(i=1,2).
\end{aligned}
\right.
\end{equation}
\end{remark}

\subsection{A sufficient condition (verification theorem)}

Here we establish the sufficient condition of the case when the observation process is not affected by the control process. Just as Remark \ref{uncontroled}, we suppose $h(t,x(t),v_1(t),v_2(t))=h(t,x(t))$ and $(W(\cdot),W_1(\cdot),W_2(\cdot))$ is a standard B.M. on probability space $(\Omega,\mathcal{F},\{\mathcal{F}_t\}_{t\geq 0}, \mathbb{P})$.

\begin{theorem}
Suppose hypothesis (H1) (H2) hold, and the adjoint equation (\ref{adj unc}) admits a solution $(p_i(\cdot),q_i(\cdot),k_i(\cdot)$,$k_{1i}(\cdot),k_{2i}(\cdot))\in\mathbb{L}_{\mathcal{F}}^2(0,T;\mathbb{R}^5)$ for $i=1,2.$

Suppose
\begin{equation}
\begin{aligned}
\mathbb{E}[{H}_1(t)|\mathcal{F}_t^1]=\min\limits_{v_1\in U_1}\mathbb{E}[{H}_1^{v_1}(t)|\mathcal{F}_t^1],\\
\mathbb{E}[{H}_2(t)|\mathcal{F}_t^2]=\min\limits_{v_2\in U_2}\mathbb{E}[{H}_2^{v_2}(t)|\mathcal{F}_t^2],
\end{aligned}
\end{equation}

where
\begin{equation}
\begin{aligned}
{H}_{i}(t)=& H_{i}(t,x,y,z,z_1,z_2,u_1,u_2;q_i,k_i,k_{1i},k_{2i},p_i)\quad(i=1,2),\\
{H}_{1}^{v_1}(t)=& H_{1}(t,x,y,z,z_1,z_2,v_1,u_2;q_1,k_1,k_{11},k_{21},p_1),\\
{H}_{2}^{v_2}(t)=& H_{2}(t,x,y,z,z_1,z_2,u_1,v_2;q_2,k_2,k_{12},k_{22},p_2).
\end{aligned}
\end{equation}

Suppose that $\mathbb{E}[{H}_{iv_i}^{v_i}(t)|\mathcal{F}_t^i]$ is continuous at $v_i=u_i(t)\quad(i=1,2).$

Suppose
\begin{equation}
\begin{aligned}
(t,x,y,z,z_1,z_2,v_i)\mapsto& H_i^{v_i}(t)\quad(i=1,2),\\
x\mapsto& g(x)\\
x\mapsto& \Phi_i(x)\ \quad(i=1,2),\\
x\mapsto& \gamma_i(y)\ \ \quad(i=1,2)
\end{aligned}
\end{equation}
are convex functions respectively. Then, $(u_1(\cdot),u_2(\cdot))$ is the open-loop Nash equilibrium point.
\end{theorem}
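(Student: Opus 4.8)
The plan is to run the standard convexity/duality argument for sufficiency, now adapted to the partial-observation setting with two players. Fix player $1$ and an arbitrary admissible $v_1(\cdot)\in\mathcal{U}_1$; write $(x^{v_1},y^{v_1},z^{v_1},z_1^{v_1},z_2^{v_1})$ for the state trajectory associated with $(v_1,u_2)$ and $(x,y,z,z_1,z_2)$ for the equilibrium trajectory. I want to show $J_1(v_1,u_2)-J_1(u_1,u_2)\geq 0$; by symmetry the same computation with the roles of the players swapped handles player $2$, which gives both defining inequalities in \eqref{nzJ} and hence that $(u_1,u_2)$ is an open-loop Nash equilibrium point. Since $h$ does not depend on the controls here, $\mathbb{P}^{v_1,u_2}=\mathbb{P}$, the process $P_i(\cdot)$ is not needed, and all expectations are ordinary $\mathbb{E}$.

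First I would expand the cost difference using the convexity of $\Phi_1$, $\gamma_1$ and $l_1$ (in the relevant variables): this bounds $J_1(v_1,u_2)-J_1(u_1,u_2)$ below by a sum of terms linear in the increments $x^{v_1}-x$, $y^{v_1}-y$, $z^{v_1}-z$, $z_j^{v_1}-z_j$, $v_1-u_1$, with coefficients $\Phi_{1x}(x(T))$, $\gamma_{1y}(y(0))$, and the $l_1$-gradients evaluated along the equilibrium. Second, I would apply It\^o's formula to $q_1(t)\bigl(x^{v_1}(t)-x(t)\bigr)$ and to $p_1(t)\bigl(y^{v_1}(t)-y(t)\bigr)$ over $[0,T]$, take expectations (the stochastic integrals vanish by the integrability in (H1)–(H2) and Lemma~2.1-type estimates), and use the terminal conditions $q_1(T)=-g_x(x(T))p_1(T)+\Phi_{1x}(x(T))$, $y^{v_1}(T)-y(T)=g(x^{v_1}(T))-g(x(T))$ together with convexity of $g$, and the initial conditions $x^{v_1}(0)=x(0)$, $p_1(0)=-\gamma_{1y}(y(0))$. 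Collecting these identities, the boundary contributions reproduce exactly the $\Phi_{1x}$, $\gamma_{1y}$ and $g$-terms from the first step, and the remaining drift terms reorganize — this is where the definition of $H_1$ and of the adjoint dynamics \eqref{adj unc} are used — into $\mathbb{E}\int_0^T\bigl[H_1^{v_1}(t)-H_1(t)-H_{1v_1}(t)(v_1(t)-u_1(t))\bigr]dt$ plus lower-order pieces controlled by convexity of $H_1^{v_1}$ in $(x,y,z,z_1,z_2,v_1)$.

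Third, convexity of $v_i\mapsto H_i^{v_i}$ and of $H_1^{v_1}$ in the state variables gives $H_1^{v_1}(t)-H_1(t)\geq$ (gradient of $H_1$ at the equilibrium) $\cdot$ (increments), so after the cancellations the whole lower bound reduces to $\mathbb{E}\int_0^T H_{1v_1}(t)\bigl(v_1(t)-u_1(t)\bigr)dt$. Finally I would condition on $\mathcal{F}_t^1$: since $v_1(t),u_1(t)$ are $\mathcal{F}_t^1$-measurable, this equals $\mathbb{E}\int_0^T \mathbb{E}\bigl[H_{1v_1}(t)\mid\mathcal{F}_t^1\bigr]\bigl(v_1(t)-u_1(t)\bigr)dt$, and the minimality hypothesis $\mathbb{E}[H_1(t)\mid\mathcal{F}_t^1]=\min_{v_1\in U_1}\mathbb{E}[H_1^{v_1}(t)\mid\mathcal{F}_t^1]$, combined with the assumed continuity of $v_1\mapsto\mathbb{E}[H_{1v_1}^{v_1}(t)\mid\mathcal{F}_t^1]$ at $u_1(t)$ and convexity of $U_1$, forces $\mathbb{E}[H_{1v_1}(t)\mid\mathcal{F}_t^1]\bigl(v_1-u_1(t)\bigr)\geq 0$ for all $v_1\in U_1$, hence the integral is $\geq 0$. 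Repeating for $i=2$ completes the proof.

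The main obstacle I expect is not any single estimate but the careful bookkeeping in the second step: matching the three It\^o expansions so that every boundary term and every drift term cancels cleanly against the convexity-generated linear terms, and in particular verifying that the $z$- and $z_j$-dependent contributions from $-dp_1$ and from the backward equation for $y^{v_1}-y$ combine into exactly the $H_{1z},H_{1z_j}$ terms — the presence of the extra $\sum_j h_j(t)z_j(t)$ inside $H_1$ (inherited from rewriting the $dY_j$ integrals) is the subtle point that must be tracked. A secondary point requiring care is justifying the vanishing of the local-martingale parts, which relies on the $\mathbb{L}^2$/$\mathbb{L}^8$ bounds from Hypotheses (H1)–(H2) and Lemmas~2.1–2.3.
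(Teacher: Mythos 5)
Your proposal is correct and follows essentially the same route as the paper: the decomposition of $J_1(v_1,u_2)-J_1(u_1,u_2)$ into running, initial, and terminal parts, convexity of $\gamma_1,\Phi_1,g$ for the boundary terms, It\^o's formula applied to $p_1(y^{v_1}-y)$ and $q_1(x^{v_1}-x)$, convexity of $H_1^{v_1}$ to reduce to $\mathbb{E}\int_0^T H_{1v_1}(t)(v_1(t)-u_1(t))\,dt$, and then conditioning on $\mathcal{F}_t^1$ together with the first-order minimality condition. (The only slip is invoking convexity of $l_1$ in the first step — that is neither assumed nor needed, since the running-cost term is rewritten exactly via the definition of $H_1$ and absorbed into the convexity of $H_1^{v_1}$, as you in fact do later.)
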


\begin{proof}
We only prove the case of $i=1$. For $\forall v_1(\cdot)\in \mathcal{U}_1$, we have
\begin{equation}
J_1(v_1(\cdot),u_2(\cdot))-J_1(u_1(\cdot),u_2(\cdot))=A+B+C,
\end{equation}

with

\begin{equation}
\begin{aligned}
A=&\mathbb{E}\int_0^T[l_1^{v_1}(t)-l_1(t)]dt,\\
B=&\gamma_1(y^{v_1}(0))-\gamma_1(y(0)),\\
C=&\mathbb{E}[\Phi_1(x^{v_1}(T))-\Phi_1(x(T))].
\end{aligned}
\end{equation}

Due to $\gamma_1(y)$ is convex on $y$,

\begin{equation}
\begin{aligned}
B\geq\gamma_{1y}(y(0))(y^{v_1}(0)-y(0)).
\end{aligned}
\end{equation}

Using It\^o's formula to $p_1(\cdot)(y^{v_1}(0)-y(0))$,

\begin{equation}
\begin{aligned}
B\geq\mathbb{E}\int_0^T\Big[-&p_1(t)\Big(f^{v_1}(t)-f(t)-\sum_{j=1}^2(z_j^{v_1}(t)-z_j(t))h_j(t)\Big)-\big(y^{v_1}(t)-y(t)\big)H_{1y}(t)-\big(z^{v_1}(t)-z(t)\big)H_{1z}(t)\\
-&\sum_{j=1}^2\big(z_j^{v_1}(t)-z_j(t)\big)H_{1z_j}(t)\Big]dt-p_1(T)\Big(g(x^{v_1}(T)-g(x(T))\Big).
\end{aligned}
\end{equation}

Due to $\Phi_{1}(x)$ is convex on $x$,

\begin{equation}
\begin{aligned}
C\geq\Phi_{1x}(x(T))(x^{v_1}(T)-x(T)).
\end{aligned}
\end{equation}

Using It\^o's formula to $q_1(\cdot)(x^{v_1}(T)-x(T))$,

\begin{equation}
\begin{aligned}
C\geq\mathbb{E}\int_0^T\Big[&q_1(t)(b^{v_1}(t)-b(t))-(x^{v_1}(t)-x(t))H_{1x}(t)+k_1(t)(\sigma^{v_1}(t)-\sigma(t))\\
+&\sum_{j=1}^2k_{j1}(t)(\sigma_j^{v_1}(t)-\sigma_j(t))\Big]dt.
\end{aligned}
\end{equation}

Moreover, we have

\begin{equation}
\begin{aligned}
A=\mathbb{E}&\int_0^T\big[H_1^{v_1}(t)-H_1(t)\big]dt-\mathbb{E}\int_0^T\bigg[\big(b^{v_1}(t)-b(t)\big)q_1(t)+\big(\sigma^{v_1}(t)-\sigma(t)\big)k_1(t)\\
+&\sum_{j=1}^2\big(\sigma_j^{v_1}(t)-\sigma_j(t)\big)k_{j1}(t)-\big[f^{v_1}(t)-f(t)-\sum_{j=1}^2\big(z_j^{v_1}(t)-z_j(t)\big)h_j(t)\big]p_1(t)\bigg]dt.
\end{aligned}
\end{equation}

From $A,B,C$,

\begin{equation}
\begin{aligned}
J_1(v_1(\cdot),u_2(\cdot))-&J_1(u_1(\cdot),u_2(\cdot))\geq \mathbb{E}\int_0^T\bigg[(H_1^{v_1}(t)-H_1(t))-(x^{v_1}(t)-x(t))H_{1x}(t)\\
-&(y^{v_1}(t)-y(t))H_{1y}(t)-(z^{v_1}(t)-z(t))H_{1z}(t)-\sum_{j=1}^2(z_j^{v_1}(t)-z_j(t))H_{1z_j}(t)\bigg]dt.
\end{aligned}
\end{equation}

Due to $(t,x,y,z,z_1,z_2,v_1)\mapsto H_1^{v_1}(t)$ is convex,

\begin{equation}
\begin{aligned}
J_1(v_1(\cdot),u_2(\cdot))-J_1(u_1(\cdot),u_2(\cdot))\geq& \mathbb{E}\int_0^TH_{1v_1}(t)(v_1(t)-u_1(t))dt\\
=&\mathbb{E}\int_0^T\mathbb{E}\Big[H_{1v_1}(t)\big(v_1(t)-u_1(t)\big)|\mathcal{F}_t^1\Big]dt.
\end{aligned}
\end{equation}

From the assumption $v_1\mapsto \mathbb{E}[H_1^{v_1}(t)|\mathcal{F}_t^1]$ is minimal at $v_1=u_1(t)$ for $\forall t\in[0,T]$ and $H_{1v_1}^{v_1}(t)$ is continuous on $v_1$, then we have

\begin{equation}
\mathbb{E}\Big[H_{1v_1}(t)(v_1(t)-u_1(t))|\mathcal{F}_t^1\Big]=\Big(\frac{\partial}{\partial v_1}\mathbb{E}\big[H_1(t)|\mathcal{F}_t^1\big]\Big)(v_1(t)-u_1(t))\geq 0.
\end{equation}

Thus, it implies that
\begin{equation}
J_1(u_1(\cdot),u_2(\cdot))=\min\limits_{v_1(\cdot)\in\mathcal{U}_1}J_1(v_1(\cdot),u_2(\cdot)).
\end{equation}

Similarly, we can prove the case when $i=2$,
\begin{equation}
J_2(u_1(\cdot),u_2(\cdot))=\min\limits_{v_2(\cdot)\in\mathcal{U}_2}J_1(u_1(\cdot),v_2(\cdot)).
\end{equation}
\end{proof}

\section{An example in finance}
In this section, we consider a realistic investment problem in our financial market. We can solve it by using the necessary and sufficient condition we derived in section 2, and give the related Nash equilibrium strategy explicitly.

We assume that there are $n+1$ assets can be continuously traded in financial market.

1 bond
\begin{equation}
\left\{
\begin{aligned}
dB(t)=&r(t)B(t)dt,\\
B(0)=&1.
\end{aligned}
\right.
\end{equation}

$n$ stocks
\begin{equation}
\left\{
\begin{aligned}
dS_i(t)=&\mu_i(t)S_i(t)dt+\sum_{j=1}^n\sigma_{ij}(t)S_i(t)dW_j(t),\\
S_i(0)=&1\quad (i=1,2\ldots n),
\end{aligned}
\right.
\end{equation}
where $W(\cdot)=(W_1(\cdot),\ldots,W_n(\cdot))^\tau$ is $n$-dimensional standard B.M. defined on probability space $(\Omega,\mathcal{F},\{\mathcal{F}_t\}_{t\geq 0}$,
$\mathbb{P})$. $\mu(\cdot)=(\mu_1(\cdot),\ldots,\mu_n(\cdot))^\tau$ is appreciation rate of the stock process. The $n\times n$ matrix valued process $\Sigma(t)=(\sigma_{ij}(t))$ is the volatility coefficients of the stock process. We set $S(\cdot)=(S_1(\cdot),\ldots,S_n(\cdot))^\tau$.

We make the following assumptions.

\textbf{Hypothesis (H3)}.
$\mu(\cdot)$ is $\mathcal{F}_t$-adapted bounded process, $r(\cdot)$ and $\sigma_{ij}(\cdot)$ are deterministic bounded coefficients. $\Sigma(\cdot)$ has full rank for $\forall t\in [0,T]$, and the inverse matrix $\Sigma(\cdot)^{-1}$ is bounded.

We suppose that there is a company hires two managers. Each of them observes few stocks from $S(\cdot)$.

\begin{equation}
\begin{aligned}
\text{manager 1:}\ \ dS^1_i(t)=&\mu_i^1(t)S_i^1(t)dt+\sum_{j=1}^{n_1}\sigma_{ij}^1(t)S_i^1(t)dW_j^1(t)\quad(i=1,\ldots,n_1), \\
\text{manager 2:}\ \ dS^2_i(t)=&\mu_i^2(t)S_i^2(t)dt+\sum_{j=1}^{n_2}\sigma_{ij}^2(t)S_i^2(t)dW_j^2(t)\quad(i=1,\ldots,n_2),
\end{aligned}
\end{equation}
where $n_k$ is the dimension of observed stocks $S^k(\cdot)=(S^k_1(\cdot),\ldots,S^k_{n_k}(\cdot))^\tau$ for $k=1,2$,  which are parts of real stock process $S(\cdot)$ corresponding to the two managers respectively. Thus, we denote the rest of both unobservable part of stock process as $S^0(\cdot)=(S^0_1(\cdot),\ldots,S^0_{n_0}(\cdot))^\tau$, which can be also invested by company. Here $W^k(\cdot)=(W_1^k(\cdot),\ldots,W_{n_k}^k(\cdot))^\tau,k=0,1,2$ are the corresponding mutually independent $n_k$-dimensional Brownian motions (see Theorem 3.1 in Xiong and Zhou\cite{XiongZhou07}). For calculation simplicity, we might as well suppose there is no common B.M. among the vectors $W^k(\cdot),k=0,1,2$ and no common observed stock between two managers. If not, it will not cause any trouble during calculation but more redundant to be represented. In that way, we have $n_0+n_1+n_2=n$.  We set $\mu^k(\cdot)=(\mu_1^k(\cdot),\ldots,\mu^k_{n_k}(\cdot))^\tau,k=0,1,2$ and $\Sigma^k(\cdot)=(\sigma_{ij}^k(\cdot))\ i,j=1,\ldots,n_k,k=0,1,2$ to be the corresponding appreciation rate and volatility of stock process $S^k(\cdot)$.  Further, we make the assumption naturally that the appreciation rate $\mu^k(\cdot)$ is unobservable for $k=0,1,2$. We denote $a_{ij}^k(\cdot)=\sum_{l=1}^{n_k}\sigma_{il}^k(\cdot)\sigma_{jl}^k(\cdot), i,j=1,\ldots,n_k, k=0,1,2.$, $A^k(\cdot)=(a_{11}^k(\cdot),\ldots,a_{n_{k}n_{k}}^k(\cdot))^\tau$,

Now we set

\begin{equation}
dY^k(\cdot)=\Sigma^k(\cdot)^{-1}d\log S^k(\cdot)\quad(k=1,2).
\end{equation}

By using It\^o's formula, our observation equation turns to
\begin{equation}\label{observe}
\begin{aligned}
dY^1(t)=&\eta^1(t)dt+dW^1(t),
\end{aligned}
\end{equation}

\begin{equation}\label{observe2}
\begin{aligned}
dY^2(t)=&\eta^2(t)dt+dW^2(t),
\end{aligned}
\end{equation}
where $Y^k(\cdot)=(Y_1^k(\cdot),\ldots,Y_{n_k}^k(\cdot))^\tau\quad(k=1,2).$

and
\begin{equation}\label{eta}
\eta^k(\cdot)\triangleq \Sigma^k(\cdot)^{-1}(\mu^k(t)-\frac{1}{2}A^k(\cdot))\quad(k=1,2).
\end{equation}

Let
\begin{equation}
\mathcal{F}_t^k=\sigma\{B(s),Y^k(s);0\leq s\leq t\}\quad(k=1,2)
\end{equation}
be the available filtration to each manager. In that case, $r(\cdot),\Sigma(\cdot)$ are all completely observable, while $\mu(\cdot)$ is unobservable.

Here we assume that the $n_k$-dimensional drift process $\mu^k(\cdot),k=1,2$ of each observation equation is the solution of the following stochastic equation 

\begin{equation}\label{mu}
\left\{
\begin{aligned}
d\mu^k(t)=&\theta^k(\delta^k-\mu^k(t))dt+\zeta^kd\bar{W}^k(t),\\
\mu^k(0)=&1\quad (k=1,2),
\end{aligned}
\right.
\end{equation}
where  $\bar{W}^k(\cdot)=(\bar{W}^k_1(\cdot),\ldots,\bar{W}^k_{n_k}(\cdot)),k=1,2$ are Brownian motions with respect to $(\Omega,\mathcal{F},\{\mathcal{F}_t\}_{t\geq 0},\mathbb{P})$, independent of $W^k(\cdot),k=1,2$ under $\mathbb{P}$. $\theta^k$ is the $n_k\times n_k$ diagonal matrix with the $i$-th component $\theta^k_i$, $\delta^k=(\delta^k_1,\ldots,\delta^k_{n_k})^\tau$, $\zeta^k=(\zeta^k_{ij}), i,j=1,\ldots,n_k$ are $n_k\times n_k$ matrix for $k=1,2$. We also suppose that $\theta^k_i$, $\delta^k_1$, $\zeta_{ij}$ are all positive constants for $ i,j=1,\ldots,n_k,k=1,2$. Then $\mu^k(\cdot),k=1,2.$ is the $n_k$-dimensional Ornstein-Uhlenbeck process with mean reverting drift.

We assume the company plans to obtain a terminal wealth $\xi$, which is a $\mathcal{F}_t$-adapted non-negative random variable satisfying $\mathbb{E}|\xi|^2<\infty$. Now the whole wealth of the company is denoted by $y(\cdot)$. The first manager invests $\pi^1_i(t)$ wealth in stock $S^1_i(t)(i=1,\ldots,n_1)$ he observed, and the second manager invests $\pi^2_i(t)$ wealth in stock $S^2_i(t)(i=1,\ldots,n_2)$ he focused on. We suppose that there are $\pi^0_i(t)$ wealth invested  by company in unobservable stocks  $S^0_i(t)(i=1,\ldots,n_0)$ of both managers. So the rest $y(t)-\sum_{k=0}^2\sum_{i=1}^{n_k}\pi_i^k(t)$ wealth invested in bond. Thus we can establish the wealth equation as

\begin{equation}\label{wealth}
\left\{
\begin{aligned}
dy(t)=&\big[r(r)y(t)+\sum_{k=0}^2\sum_{i=1}^{n_k}(\mu_i^k(t)-r(t))\pi_i^k(t)+I_1(t)+I_2(t)\big]dt+\sum_{k=0}^2\sum_{i=1}^{n_k}\sum_{j=1}^{n_k}\pi_i^k(t)\sigma_{ij}^k(t)dW_j^k(t),\\
y(T)=&\xi,
\end{aligned}
\right.
\end{equation}
where $I_1(\cdot),I_2(\cdot)$ are represented to the instantaneous capital injection of each manager to guarantee the terminal wealth of the company.

For each of them, their mission is to use the minimal capital injection wealth and the minimum start-up capital to make sure the company reach the ultimate wealth. Meanwhile, any one of them has his own utility on their injection process. The more capital he uses, the danger he undertakes. So we can define the related utility function for each manager.

\begin{equation}\label{cost}
J_i(I_1(\cdot),I_2(\cdot))=\mathbb{E}\int_0^T[L_ie^{-\beta t}I_i^2(t)dt+M_iy(0)]\quad(i=1,2),
\end{equation}
where $L_i,M_i$ are two positive constants, $\beta$ is the discount rate. We define the risk-seeking running cost as a criteria for injection utility. To attain the terminal wealth, they want to minimize both of the injection utility and the start-up wealth value. That is

\begin{equation}
\left\{
\begin{aligned}
J_1(\bar{I}_1(\cdot),\bar{I}_2(\cdot))=\min\limits_{I_1(\cdot)\in\mathcal{I}_1}J_1(I_1(\cdot),\bar{I}_2(\cdot)),\\
J_2(\bar{I}_1(\cdot),\bar{I}_2(\cdot))=\min\limits_{I_2(\cdot)\in\mathcal{I}_2}J_2(\bar{I}_1(\cdot),I_2(\cdot)),
\end{aligned}
\right.
\end{equation}
where we define
\begin{equation}
\mathcal{I}_i=\{I_i(\cdot)\in L^2_{\mathcal{F}_\cdot^i}(0,T;\mathbb{R});I_i(t)\geq 0,t\in [0,T])\}\quad(i=1,2).
\end{equation}

We regard $(\bar{I}_1(\cdot),\bar{I}_2(\cdot))$ as the open-loop Nash equilibrium strategy of this investment problem.

The wealth equation (\ref{wealth}) is a backward case. We can denote $\pi^k(t)=(\pi_i^k(t),\ldots,\pi_{n_k}^k(t))^\tau$, $z^k(t)=(z_i^k(t),\ldots,z_{n_k}^k(t))=\pi^k(t)^\tau\Sigma^k(t)\quad(k=0,1,2).$

Therefore, our wealth equation turns into

\begin{equation}\label{wealth new}
\left\{
\begin{aligned}
dy(t)=&\big[r(t)y(t)+\sum_{k=0}^2b^k(t)^\tau z^k(t)^\tau+I_1(t)+I_2(t)\big]dt+\sum_{k=0}^2z^k(t)dW^k(t),\\
y(T)=&\xi,
\end{aligned}
\right.
\end{equation}
where
\begin{equation}\label{b}
b^k(t)=\Sigma^k(t)^{-1}(\mu^k(t)-r(t))\quad(k=0,1,2).
\end{equation}

Form (\ref{wealth new}) and (\ref{cost}), we use the maximum principle derived in section 2.

The Hamiltonian functions are
\begin{equation}
H_i(t,y,z^0,z^1,z^2,I_1,I_2;p_i)=\Big(r(t)y(t)+\sum_{k=0}^2b^k(t)^\tau z^k(t)^\tau+I_1(t)+I_2(t)\Big)p_i(t)+L_ie^{-\beta t}I_i^2(t),
\end{equation}
for $i=1,2.$

The adjoint process $p_i(\cdot)$ satisfies

\begin{equation}
\left\{
\begin{aligned}
dp_i(t)=&-r(t)p_i(t)dt-\sum_{k=0}^2b^k(t)^\tau p_i(t)dW^k(t),\\
dp_i(0)=&-M_i\quad(i=1,2).
\end{aligned}
\right.
\end{equation}

From the necessary condition, we can find a candidate open-loop Nash equilibrium point

\begin{equation}
\left\{
\begin{aligned}
\bar{I}_1(t)=&-\frac{1}{2}e^{\beta t}L_1^{-1}\widehat{p_1}(t),\\
\bar{I}_2(t)=&-\frac{1}{2}e^{\beta t}L_2^{-1}\widetilde{p_2}(t),
\end{aligned}
\right.
\end{equation}
where we set $\widehat{\phi}(t)=\mathbb{E}[\phi(t)|\mathcal{F}_t^1]$,\ \  $\widetilde{\psi}(t)=\mathbb{E}[\psi(t)|\mathcal{F}_t^2]$ for $\forall\phi(\cdot),\psi(\cdot)\in\mathcal{F}_\cdot$.

Now focusing on the $i=1$ case, from observation equation (\ref{observe}) and the Kushner-FKK equation in Xiong \cite{Xiong08}, we have

\begin{equation}\label{filtering adjoint}
\left\{
\begin{aligned}
d\widehat{p}_1(t)=&-r(t)\widehat{p}_1(t)dt+\Big[-\widehat{b^1(t)^\tau p_1(t)}+\widehat{\eta^1(t)^\tau p_1(t)}-\widehat{\eta^1}(t)^\tau\widehat{p}_1(t)\Big]d\widehat{W}^1(t),\\
d\widehat{p}_1(0)=&-M_1,
\end{aligned}
\right.
\end{equation}
where the innovation process $\widehat{W}^1(\cdot)$ satisfying
\begin{equation}
\widehat{W}^1(t)=Y^1(t)-\int_0^t\widehat{\eta^1}(s)ds
\end{equation}
is a $\mathcal{F}_t^1$-Brownian motion under probability measure $\mathbb{P}$.

From $\eta^1(\cdot)$ in (\ref{eta}), $b^1(t)$ in (\ref{b}), we find that
\begin{equation}\label{etatob}
\widehat{\eta^1(t)p_1(t)}-\widehat{\eta^1}(t)\widehat{p}_1(t)=\Sigma^1(t)^{-1}\big(\widehat{\mu^1(t)p_1(t)}-\widehat{\mu^1}(t)\widehat{p}_1(t)\big)=\widehat{b^1(t)p_1(t)}-\widehat{b^1}(t)\widehat{p}_1(t).
\end{equation}

Substituting (\ref{etatob}) into (\ref{filtering adjoint}), we get
\begin{equation}
\left\{
\begin{aligned}
d\widehat{p}_1(t)=&-r(t)\widehat{p}_1(t)dt-\widehat{b^1}(t)^\tau\widehat{p}_1(t)d\widehat{W}^1(t),\\
d\widehat{p}_1(0)=&-M_1.\qquad
\end{aligned}
\right.
\end{equation}

Thus
\begin{equation}\label{phat}
\widehat{p}_1(t)=-M_1\exp\{\int_0^t[-r(s)-\frac{1}{2}\widehat{b^1}(s)^2]ds-\int_0^t\widehat{b^1}(s)d\widehat{W}^1(s)\}.
\end{equation}

From equation (\ref{observe}), (\ref{mu}) and Theorem 8.1 in  \cite{Xiong08}, we have

\begin{equation}\label{muhat}
\left\{
\begin{aligned}
d\widehat{\mu^1}(t)=&\theta^1(\delta^1-\widehat{\mu^1}(t))dt+(P_1(t)(\Sigma^1(t)^{-1})^\tau)d\widehat{W}^1(t),\\
\mu^k(0)=&I\quad (k=1,2),
\end{aligned}
\right.
\end{equation}
and
$P_1(\cdot)=\mathbb{E}\big[\big(\mu^1(t)-\widehat{\mu^1}(t)\big)\big(\mu^1(t)-\widehat{\mu^1}(t)\big)^\tau\big]=\mathbb{E}\big[\big(\mu^1(t)-\widehat{\mu^1}(t)\big)\big(\mu^1(t)-\widehat{\mu^1}(t)\big)^\tau|\mathcal{F}_t^1]$ satisfies

\begin{equation}\label{P}
\left\{
\begin{aligned}
&\dot{P_1}(t)+2\theta^1P_1(t)+P_1(t)(\Sigma^1(t)^{-1})^\tau\Sigma^1(t)^{-1}P_1(t)-\zeta^1(\zeta^1)^\tau=0,\\
&P_1(0)=I.
\end{aligned}
\right.
\end{equation}

From classic Riccati equation theory, (\ref{P}) exists a unique solution. Then, we obtain the unique expression of $\widehat{b^1}(\cdot)$ in (\ref{phat}).

Similarly, we can obtain $\widetilde{p}_2(\cdot)$ satisfies
\begin{equation}
\left\{
\begin{aligned}
d\widetilde{p}_2(t)=&-r(t)\widetilde{p}_2(t)dt-\widetilde{b^2}(t)^\tau\widetilde{p}_2(t)d\widetilde{W}^2(t),\\
d\widetilde{p}_2(0)=&-M_1,
\end{aligned}
\right.
\end{equation}

where
\begin{equation}
\widetilde{W}^2(t)=Y^2(t)-\int_0^t\widetilde{\eta^2}(s)ds.
\end{equation}

Thus
\begin{equation}\label{phat2}
\widetilde{p}_2(t)=-M_1\exp\{\int_0^t[-r(s)-\frac{1}{2}\widetilde{b^2}(s)^2]ds-\int_0^t\widetilde{b^2}(s)d\widetilde{W}^2(s)\}.
\end{equation}

From equation (\ref{observe2}), (\ref{mu}) and Theorem 8.1 in  \cite{Xiong08}, we have

\begin{equation}\label{muhat}
\left\{
\begin{aligned}
d\widetilde{\mu^2}(t)=&\theta^2(\delta^2-\widetilde{\mu^2}(t))dt+(P_2(t)(\Sigma^2(t)^{-1})^\tau)d\widetilde{W}^2(t),\\
\mu^k(0)=&I\quad (k=1,2),
\end{aligned}
\right.
\end{equation}
and
$P_2(\cdot)=\mathbb{E}\big[\big(\mu^2(t)-\widetilde{\mu^2}(t)\big)\big(\mu^2(t)-\widetilde{\mu^2}(t)\big)^\tau\big]=\mathbb{E}\big[\big(\mu^2(t)-\widetilde{\mu^2}(t)\big)\big(\mu^2(t)-\widetilde{\mu^2}(t)\big)^\tau|\mathcal{F}_t^2]$ satisfies

\begin{equation}\label{P2}
\left\{
\begin{aligned}
&\dot{P}_2(t)+2\theta^2P_2(t)+P_2(t)(\Sigma^2(t)^{-1})^\tau\Sigma^2(t)^{-1}P_2(t)-\zeta^2(\zeta^2)^\tau=0,\\
&P_2(0)=I.
\end{aligned}
\right.
\end{equation}

From classic Riccati equation theory, (\ref{P2}) exists a unique solution. Then, we obtain the unique expression of $\widetilde{b^2}(\cdot)$ in (\ref{phat2}).

Finally, from the linearity of state processes and convexity of cost functions as well as the sufficient condition we discussed above, we know that $(\bar{I}_1(\cdot),\bar{I}_2(\cdot))$ is the open-loop Nash equilibrium strategy satisfies

\begin{equation}
\left\{
\begin{aligned}
\bar{I}_1=&\frac{1}{2}e^{\beta t}L_1^{-1}M_1\exp\{\int_0^t[-r(s)-\frac{1}{2}\widehat{b^1}(s)^2]ds-\int_0^t\widehat{b^1}(s)d\widehat{W}^1(s)\},\\
\bar{I}_2=&\frac{1}{2}e^{\beta t}L_2^{-1}M_1\exp\{\int_0^t[-r(s)-\frac{1}{2}\widetilde{b^2}(s)^2]ds-\int_0^t\widetilde{b^2}(s)d\widetilde{W}^2(s)\}.
\end{aligned}
\right.
\end{equation}

\end{document}